\newcommand{\A}{{\mathcal{A}}}
\newcommand{\affine}{{\mathbb{A}}}
\newcommand{\C}{{\mathbb{C}}}
\newcommand{\g}{{\mathfrak{g}}}
\newcommand{\gl}{{\mathfrak{gl}}}
\renewcommand{\k}{{\mathbbm{k}}}
\renewcommand{\L}{{\mathcal{L}}}
\newcommand{\N}{{\mathbb{N}}}
\newcommand{\T}{{\mathbb{T}}}
\newcommand{\U}{{\mathcal{U}}}
\newcommand{\V}{{\mathcal{V}}}
\newcommand{\Z}{{\mathbb{Z}}}
\newcommand{\Coind}{{\operatorname{Coind}}}
\newcommand{\Der}{{\operatorname{Der}}}
\newcommand{\Hom}{{\operatorname{Hom}}}
\newcommand{\End}{{\operatorname{End}}}
\newcommand{\DR}{{\textup{dR}}}
\newcommand{\GF}{{\textup{GF}}}
\newcommand{\fin}{{\textup{fin}}}
\newcommand{\vphi}{{\varphi}}
\newcommand{\hphi}{{\tilde{\varphi}}}
\renewcommand{\sp}{{\#}}
\newcommand{\wo}{\widehat{\otimes}}
\newcommand{\dx}[1]{{\frac{\partial}{\partial x_{#1}}}}
\newtheorem{theorem}{Theorem}[section]
\newtheorem{lemma}[theorem]{Lemma}
\newtheorem{proposition}[theorem]{Proposition}
\newtheorem{corollary}[theorem]{Corollary}
\theoremstyle{definition}
\newtheorem{definition}{Definition}[section]
\newtheorem{remark}[definition]{Remark}
\title{Gelfand-Fuks cohomology of vector fields on algebraic varieties}
\author{Yuly Billig}
\address{School of Mathematics and Statistics, Carleton University, Ottawa, Canada}
\author{Kathlyn Dykes}
\begin{document}
\let\thefootnote\relax\footnotetext{{\it 2020 Mathematics Subject Classification.}
Primary 17B56; Secondary 17B66}

\begin{abstract}
For an affine algebraic variety, we introduce algebraic Gelfand-Fuks cohomology of polynomial vector fields with coefficients in differentiable $AV$-modules. Its complex is given by cochains that are differential operators in the sense of Grothendieck. Using the jets of vector fields, we compute this cohomology for varieties with uniformizing parameters. We prove that in this case, Gelfand-Fuks cohomology with coefficients in a tensor module decomposes as a tensor product of the de Rham cohomology of the variety and the cohomology of the Lie algebra of vector fields on affine space, vanishing at the origin. We explicitly compute this cohomology for affine space, the torus, and Krichever-Novikov algebras. 
\end{abstract}

\maketitle

\section{Introduction}

The study of Lie algebras of vector fields goes back to the foundational works of Sophus Lie \cite{Lie1880} and \'Elie Cartan \cite{Cartan1909}.
Starting in the late 1960s, Gelfand's school actively studied cohomology theory for the Lie algebras of vector fields. It was realized, that beyond the simplest examples, computation of the general cohomology of these algebras is intractable. To make computations possible, a special cohomology theory was introduced by Gelfand and Fuks \cite{GelfandFuks}. Gelfand-Fuks cohomology was defined in an analytic setting of $\mathcal{C}^\infty$ varieties.  
For a comprehensive overview of this work, see \cite{Fuks1986}. In the context of complex varieties, a subcomplex, called the diagonal complex, was introduced, see Tsujishita \cite{Tsujishita1981} for many of these results. 

Although there was much progress in this theory, the standard approach was computationally difficult, causing the area to stall for many years. There has been some recent progress. On the algebraic side, Hennion and Kapranov \cite{HennionKarpranov2022} solved an outstanding open problem, proving that  the Lie algebra cohomology of vector fields on a smooth affine variety with coefficients in the trivial module is finite dimensional. To do this, they developed an approach using factorization algebras. On the topological side, the work of Williams \cite{Williams24} computed the local cohomology of vector fields on a manifold, connecting the diagonal complex with the work of Hennion and Kapranov.

In this paper, we introduce algebraic Gelfand-Fuks cohomology, working in a purely algebraic setting. We take advantage of the new machinery that was recently developed in representation theory. A key tool that we make use of in this paper is $AV$-modules. $AV$-modules are modules that admit simultaneous actions of a commutative algebra of polynomial functions on an affine variety and the Lie algebra of vector fields, such that these actions are compatible via the Leibniz rule. $AV$-modules were instrumental for the classification of simple weight modules for the Lie algebra of vector fields on a torus \cite{BilligFutorny2016} and on an affine space \cite{XL2023, GS2022}. In particular, we will need the notion of a differentiable $AV$-module, i.e., a module for which the representation is a differential operator in the sense of Grothendieck. Differentiable modules are studied in \cite{BouazizRocha}. 

The main goal of this paper is to develop a new algebraic approach to Gelfand-Fuks cohomology theory. The results we obtain are not surprising -- they agree with those obtained in the $\mathcal{C}^\infty$ setting. 

\begin{theorem}\label{theorem:intro}
Let $X$ be an affine algebraic variety with uniformizing parameters $x_1, \dots, x_n$. Let $V$ be the Lie algebra of vector fields and $A$ be the algebra of regular functions on $X$. Let $\L_+$ be the Lie algebra of vector fields on an affine space $\mathbb{A}^n$ vanishing at the origin. 
Let $W$ be a finite-dimensional module for $\L_+$, and let $A \otimes W$
be the corresponding tensor module on $X$. 
Then we have the following isomorphism:
\[
H^*_{\GF}(V, A \otimes W) \cong H_{\DR}^*(X) \otimes H^*(\L_+, W).
\] 
\end{theorem}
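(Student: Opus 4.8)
The plan is to collapse the infinite-dimensional problem to a finite-dimensional fiber computation by using that Gelfand-Fuks cochains are differential operators and hence factor through jets. First I would use the uniformizing parameters $x_1,\dots,x_n$ to trivialize the bundle of jets: they provide a global $A$-basis $\partial/\partial x_1,\dots,\partial/\partial x_n$ of $V$ and realize the jet expansion of a vector field as an element of the completed Lie algebra $A\,\widehat{\otimes}\,\L$, where $\L=\Der\,\k[[t_1,\dots,t_n]]$ is the Lie algebra of formal vector fields. Because a differentiable cochain sees only finitely many jet coordinates, this identifies the Gelfand-Fuks complex $C^*_{\GF}(V,A\otimes W)$ with a complex of continuous cochains on the jet Lie algebra. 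Under this identification the translation part $\L_{-1}\cong\k^n$ acts on the function factor $A$ through the derivations $\partial/\partial x_i$, while the stabilizer of the origin $\L_+=\L_{\geq 0}\supseteq\gl_n$ acts on the fiber $W$; this is precisely the tensor-module structure. The same finiteness remark shows that the completed fiber computation agrees with the Chevalley-Eilenberg cohomology of the polynomial Lie algebra $\L_+$ appearing in the statement.

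Next I would exhibit the coinduced structure of the coefficients: along the jet map the coefficient module is, fiberwise, coinduced from the $\L_+$-module $W$, so that $A\otimes W$ is (the differentiable part of) $\Coind^{\L}_{\L_+}W$. Shapiro's lemma for continuous relative Lie algebra cohomology then isolates the fiber, or vertical, cohomology as $H^*(\L_+,W)$: integrating out the directions transverse to $\L_+$ contributes nothing beyond this. For the local model $X=\affine^n$, where $H^*_{\DR}(\affine^n)=\k$, this already yields the theorem and serves as a sanity check, since there $V$ is the full Witt algebra and the statement reduces to $H^*_{\GF}(V,A)\cong H^*(\L_+,\k)$.

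The de Rham factor then enters globally, through the tangent Lie algebroid. I would filter the cochain complex by the number of arguments tested in the fiber directions $A\,\widehat{\otimes}\,\L_+$, a Hochschild-Serre-type filtration for this subalgebra. The vertical cohomology is the fiber factor $H^*(\L_+,W)$ just computed, which organizes into a local system over $X$; the surviving transverse differential is supported on the translation directions $\bigoplus_i A\,\partial/\partial x_i=A\otimes\L_{-1}$. There the relevant cochains are the $A$-multilinear, antisymmetric maps $\Hom_A(\Lambda^\bullet_A V,A)=\Omega^\bullet(X)$, and the part of the Chevalley-Eilenberg differential coming from the action of $V$ on $A$ restricts to the de Rham differential $d_{\DR}$ — the classical identification of antisymmetric $A$-linear vector-field cochains with differential forms. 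Because the uniformizing parameters furnish a global frame, this local system is trivial, so the $E_2$-page is $H^*_{\DR}(X)\otimes H^*(\L_+,W)$, and the Künneth theorem over $\k$ gives the asserted isomorphism once the spectral sequence degenerates.

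The technical heart, and the step I expect to be the main obstacle, is showing that this decomposition survives passage to actual cohomology rather than living only on the $E_2$-page. Two features rescue it. Reductivity of $\gl_n=\L_0$, together with finite-dimensionality of $W$, lets me pass to $\gl_n$-relative invariant cochains; since the global frame makes the associated frame bundle trivial, no characteristic-class twisting can obstruct the splitting, forcing the higher differentials to vanish. Freeness of $V$ as an $A$-module, combined with continuity, guarantees that the translation directions are $A$-linearly independent from the fiber directions, so the de Rham complex splits off as a genuine tensor factor. The delicate points are therefore purely about the completed, topological setting: verifying that the filtration is exhaustive and complete so that the continuous spectral sequence converges, and that taking $\gl_n$-invariants commutes with cohomology and with the completion. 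The finite order of the differentiable cochains is exactly what keeps all these completions harmless.
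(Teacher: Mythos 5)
Your outline shares its skeleton with the paper's actual proof---pass to jets via the uniformizing parameters, filter \`a la Hochschild--Serre by the fiber directions $A\otimes\widehat\L_+$, identify the transverse factor with de Rham cohomology and the fiber factor with $H^*(\L_+,W)$---but two of your load-bearing steps fail as stated. The first is the Shapiro step. The polynomial tensor module $A\otimes W$ is \emph{not} coinduced: coinduction from $\L_+$ produces formal tensor fields, $\Hom_{U(\L_+)}(U(\L),W)\cong\k\llbracket x_1,\dots,x_n\rrbracket\otimes W$, of which $A\otimes W$ is only a proper submodule; your own parenthetical ``the differentiable part of $\Coind^{\L}_{\L_+}W$'' concedes exactly this. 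Shapiro's lemma computes the cohomology of the coinduced module itself, not of a differentiable part of it, and there is no off-the-shelf continuous version that applies in this algebraic setting---proving such a statement would be of comparable difficulty to the theorem itself. The paper sidesteps Shapiro entirely: after lifting Gelfand-Fuks cochains to $A$-multilinear finite cochains on $A\widehat{\sp}V$ (Proposition \ref{isoGF}; note that $A$-linearity, not mere continuity, is the crucial structure here), the vertical subalgebra is $A\otimes\widehat\L_+$ acting by $(f\otimes x)(g\otimes w)=fg\otimes xw$, and $A$-linearity of cochains gives $C^*_A(A\otimes\g,\,S\otimes W)\cong S\otimes C^*(\g,W)$ directly, with no homological input at all.

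The second gap is the degeneration, which you yourself call the technical heart. ``Reductivity of $\gl_n$ plus a global frame, hence no characteristic-class twisting, hence the higher differentials vanish'' is an intuition imported from the $\mathcal{C}^\infty$ theory, not an argument: it names the reason degeneration is plausible (parallelizability) but establishes nothing about $d_r$ for $r\geq 2$, and in the topological theory degeneration genuinely fails without parallelizability, so something must actually be proved. The paper's mechanism is concrete: the cross-product chain map $\ast\colon C^*_A(V,S)\otimes C^*(\g,W)\to C^*_A(V\ltimes(A\otimes\g),\,S\otimes W)$, $(\alpha\ast\beta)(v_1,\dots,v_k,f_1\otimes x_1,\dots,f_m\otimes x_m)=f_1\cdots f_m\,\alpha(v_1,\dots,v_k)\otimes\beta(x_1,\dots,x_m)$, satisfies $d(\alpha\ast\beta)=d\alpha\ast\beta+(-1)^k\alpha\ast d\beta$, so every class in $E_2^{p,q}\cong H^p_A(V,S)\otimes H^q(\g,W)$ is represented by an honest cocycle of the total complex; since each $d_r$ is induced by the total differential, all of them vanish and the sequence collapses at $E_2$ (Proposition \ref{cohomology_map} and the discussion following it). Replacing your two steps by (i) the $A$-linearity computation of the vertical cohomology and (ii) this explicit cocycle-representative argument turns your outline into the paper's proof. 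A smaller gap in the same spirit: finiteness of differentiable cochains alone does not yield $H^*_\fin(\widehat\L_+,W)\cong H^*(\L_+,W)$; the paper needs the grading argument (all cocycles are concentrated in degree zero, and the grading of $W$ is finite) to identify the two.
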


Let us outline our method for computing the algebraic Gelfand-Fuks cohomology.
First, we show that the cohomology of $V$ with values in a differentiable $AV$-module $M$ coincides with the $A$-linear cohomology of the Lie algebra of polynomial jets $A \# V$:
\[
H^* (V, M) \cong H^*_A (A\# V, M).
\]
If we apply this isomorphism to the Gelfand-Fuks complex, on the right-hand side we will get the finite cohomology of the completed Lie algebra $A\widehat{\#}V$ of $\infty$-jets of vector fields on $X$:
\[
H^*_\text{GF} (V, M) \cong H^*_{A,\text{fin}} (A\widehat{\#} V, M).
\]
By a result of \cite{BilligIngallsNasr, BilligIngalls}, on a variety with uniformizing parameters, the Lie algebra $A\widehat{\#}V$ of $\infty$-jets of vector fields is isomorphic to a semi-direct product:
\[
A\widehat{\#} V \cong V \ltimes (A \wo \L_+).
\]
To compute $H^*_{A,\text{fin}} ( V \ltimes (A \wo \L_+), A \otimes W)$, we establish a K\"unneth formula for this semi-direct product (Theorem 5.2), and obtain
\[
H^*_{A,\text{fin}} ( V \ltimes (A \wo \L_+), A \otimes W) \cong
H^*_A (V, A) \otimes H^*_\text{fin} (\widehat{\L}_+, W).
\]
Finally, we simplify the right-hand side by showing both $H^*_\text{fin} (\widehat{\L}_+, W) = H^* (\L_+, W)$, and that $H^*_A (V, A)$ 
is the de Rham cohomology of $X$:
\[
H^*_A (V, A) \cong H_{\DR}^*(X).
\]

The paper is organized as follows. In Section \ref{section:GFcohomology}, we introduce the main notion of the paper: algebraic Gelfand-Fuks cohomology of vector fields with values in a differentiable $AV$-module. We review some necessary background on the jets of vector fields in \'etale charts in Section \ref{section:background}, which will be our main tools. We describe the de Rham cohomology as the Lie algebra cohomology of vector fields in Section \ref{section:deRham}. We establish a K\"unneth formula for semi-direct products of Lie algebras of vector fields in Section \ref{section:Kunneth}. In Section \ref{section:etalecharts}, we compute the algebraic Gelfand-Fuks cohomology with coefficients in tensor modules for varieties with uniformizing parameters and we explicitly compute this cohomology for the affine space, the torus, and Krichever-Novikov algebras in the last Section of the paper. 

\

{\bf Acknowledgements:}
Y.B. gratefully acknowledges support with a Discovery grant from the Natural Sciences and Engineering Research Council of Canada. 

\section{Algebraic Gelfand-Fuks cohomology}\label{section:GFcohomology}

Let $(A, V)$ be a Rinehart pair, that is, let $A$ be a unital commutative associative algebra and $V$ be a Lie algebra, such that $V$ is an $A$-module, 
and $A$ is a $V$-module with $V$ acting on $A$ by derivations, satisfying
\begin{align}\label{equation:LieRinehartpair}
[\eta, f \mu] &= \eta(f) \mu + f [\eta, \mu], \\
(f \mu) (g) &= f (\mu(g))
\ \ \ \text{for} \, f, g \in A, \ \eta, \mu \in V.
\end{align}

A prime example of a Rinehart pair is given by vector fields on a variety. 
Let $X$ be a smooth irreducible affine algebraic variety over a field $\k$ of characteristic zero. To construct a Rinehart pair, we take $A$ to be the coordinate ring of $X$ and $V$ the Lie algebra of polynomial vector fields on $X$, $V = \Der\,A$. 

\begin{definition}\label{definition:AVmodules} 
For a Rinehart pair $(A, V)$, a module $M$ is called an \emph{$AV$-module} if $M$ is an $A$-module, a $V$-module, and their actions are compatible via the Leibniz rule
\begin{align}\label{equation:Leibniz}
\eta (f m) = \eta(f) m + f( \eta \,m)
\ \  \ \text{for} \, \eta \in V, f \in A, m \in M.
\end{align}
 
\end{definition}

In the context of algebraic varieties, $AV$-modules are generalizations of $D$-modules. Every $D$-module is automatically an $AV$-module, however there is an abundance of natural $AV$-modules on $X$ which do not admit a $D$-module structure. In particular, vector fields $V$ themselves are an $AV$-module, but not a $D$-module. In the category of $AV$-modules, $D$-modules are characterized by an additional axiom:
\[
(f \eta)\, m = f\, (\eta \,m).
\]

\begin{definition}\label{definition:differentiable}
An $AV$-module $M$ is called \emph{differentiable} (or $N$-differentiable) if there exists $N \in \N$ such that
\begin{equation}
\sum_{j=0}^N (-1)^j \binom{N}{j} f^{N-j} \left( f^j \eta \right) m = 0
\ \ \ \text{for all} \ f\in A, \eta \in V, m \in M.
\end{equation}
\end{definition}
Being a differentiable $AV$-module is equivalent to the condition that the representation map \ $V \rightarrow \End(M)$ is a differential operator in the sense of Grothendieck \cite{Grothendieck}. 
$D$-modules are precisely 1-differentiable $AV$-modules.
By \cite[Proposition 4.7]{BouazizRocha}, every $AV$-module which is finitely generated as an $A$-module is differentiable. 

Consider the cochain complex $\{ C^k(V, M) = \Hom (\bigwedge^k V, M), d\}$
with the Chevalley-Eilenberg differentials:
\begin{align*}
(d_k\vphi)(\eta_1, \dots, \eta_{k+1}) &= \sum_{1 \leq s< t \leq k+1} (-1)^{s+t-1} \vphi( [\eta_s, \eta_t], \eta_1, \dots, \hat{\eta}_s, \dots, \hat{\eta}_t, \dots, \eta_{k+1}) \\
& \quad + \sum_{1 \leq s \leq k+1} (-1)^s \eta_s \, \vphi(\eta_1, \dots, \hat{\eta}_s, \dots, \eta_{k+1})
\end{align*}
for $\vphi \in C^k(V, M)$, $\eta_1, \dots, \eta_{k+1} \in V$ and $k \in \Z_+$, where the notation $\hat{\eta}_i$ means that this element is omitted.

Next, we define the main notion of this paper -- \emph{algebraic Gelfand-Fuks} cohomology with values in a differentiable $AV$-module $M$. 
 For each $k \in \N$, define
\begin{align*}
C_{\GF}^k(V, M) = \big\{ \vphi \in C^k(V ,M) \ \big| \ & \exists p > 0 \  \ \forall f \in A, \, \forall \eta_1, \eta_2, \dots, \eta_k \in V, \\
&  \sum_{i=0}^p (-1)^i \binom{p}{i} f^{p-i} \vphi(f^i \eta_1, \eta_2, \dots, \eta_k)=0 \big\}.
\end{align*}
For $k=0$ we set $C_{\GF}^0 (V, M) = M$.
The differential maps $d_k: C_\GF^k(V, M) \rightarrow C_\GF^{k+1}(V,M)$ are the
usual  Chevalley-Eilenberg differentials. We first show that this indeed forms a subcomplex so that we can take its cohomology. 

\begin{lemma}
Let $M$ be a differentiable $AV$-module. Then
$\{C^*_{\GF}(V, M), d\}$ is a subcomplex of the Chevalley-Eilenberg complex
$\{ C^*(V, M), d\}$.
\end{lemma}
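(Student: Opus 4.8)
The plan is to verify that the Chevalley--Eilenberg differential maps $C^k_{\GF}(V,M)$ into $C^{k+1}_{\GF}(V,M)$; since $d^2=0$ already holds on the ambient complex $C^*(V,M)$, this is the only thing that needs checking. The organizing idea is to read the defining identity of $C^k_{\GF}$ as the vanishing of a single finite-difference operator. For $f\in A$ let $\nabla_f$ denote the operator on cochains acting in the first argument by $(\nabla_f\vphi)(\eta_1,\dots,\eta_k)=f\,\vphi(\eta_1,\dots,\eta_k)-\vphi(f\eta_1,\eta_2,\dots,\eta_k)$. Because left multiplication by $f$ on $M$ commutes with the precomposition $\eta_1\mapsto f\eta_1$, one obtains $(\nabla_f^p\vphi)(\eta_1,\dots,\eta_k)=\sum_{i=0}^p(-1)^i\binom{p}{i}f^{p-i}\vphi(f^i\eta_1,\eta_2,\dots,\eta_k)$, so $\vphi\in C^k_{\GF}$ is exactly the statement that there is a $p>0$ with $\nabla_f^p\vphi=0$ for all $f$. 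I first record two preliminary facts in this language: since cochains are alternating, the condition is slot-independent (vanishing of $\nabla_f^p$ in the first argument forces the analogous finite difference to vanish in every argument, up to sign), and the $N$-differentiability of $M$ in Definition~\ref{definition:differentiable} says precisely that the map $\eta\mapsto\eta\,m$ satisfies $\nabla_f^N(\,\cdot\,m)=0$.

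Next I fix $\vphi\in C^k_{\GF}(V,M)$ with parameter $p$, fix $N$ making $M$ differentiable, and study $\nabla_f^{p'}(d\vphi)$ in the first slot for a $p'$ to be chosen. As $\nabla_f^{p'}$ is linear in the cochain, I split $d\vphi(\eta_1,\dots,\eta_{k+1})$ according to how $\eta_1$ enters the four types of summands: (1) the bracket terms $\vphi([\eta_1,\eta_t],\dots)$, where $\eta_1$ sits inside the bracket; (2) the bracket terms $\vphi([\eta_s,\eta_t],\eta_1,\dots)$ with $s,t\ge 2$, where $\eta_1$ occupies an ordinary slot; (3) the module term $-\eta_1\,\vphi(\eta_2,\dots,\eta_{k+1})$; and (4) the module terms $\eta_s\,\vphi(\eta_1,\dots)$ with $s\ge 2$. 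Groups (2) and (3) are immediate: (2) is the slot-independent $\GF$ condition on $\vphi$, killed as soon as $p'\ge p$, while (3) is literally the differentiability identity applied to $m=\vphi(\eta_2,\dots,\eta_{k+1})$, killed once $p'\ge N$. (The case $k=0$ is nothing but this group-(3) computation.)

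The two genuinely interacting cases are (1) and (4), which I expect to be the main obstacle. In (4), $\nabla_f$ fails to commute with the $M$-action because $\eta_s$ is a derivation: the Leibniz rule \eqref{equation:Leibniz} gives $\nabla_f(\eta_s\,\vphi)=\eta_s\,(\nabla_f\vphi)-\eta_s(f)\,\vphi$, so $[\nabla_f,\eta_s]$ is multiplication by $-\eta_s(f)\in A$, which in turn commutes with $\nabla_f$. This Heisenberg-type relation iterates to $\nabla_f^{p'}(\eta_s\,\vphi)=\eta_s\,\nabla_f^{p'}\vphi-p'\,\eta_s(f)\,\nabla_f^{p'-1}\vphi$, vanishing once $p'\ge p+1$. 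In (1), scaling $\eta_1\mapsto f^i\eta_1$ inside the bracket and invoking the Rinehart identity \eqref{equation:LieRinehartpair} produces $[f^i\eta_1,\eta_t]=f^i[\eta_1,\eta_t]-\eta_t(f^i)\eta_1$; the first piece reassembles into $\nabla_f^{p'}$ evaluated with $[\eta_1,\eta_t]$ in the first slot, and the second, after using $\eta_t(f^i)=i\,f^{i-1}\eta_t(f)$ and $i\binom{p'}{i}=p'\binom{p'-1}{i-1}$, telescopes into a multiple of $(\nabla_f^{p'-1}\vphi)$ evaluated at $\eta_t(f)\eta_1$; both vanish once $p'\ge p+1$. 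Choosing $p'=\max(p+1,N)$ therefore annihilates all four groups, giving $\nabla_f^{p'}(d\vphi)=0$ for every $f$ and hence $d\vphi\in C^{k+1}_{\GF}(V,M)$. The only routine care required is the sign bookkeeping and the observation that $\eta_1$ occurs exactly once in each summand, so that the slotwise operator $\nabla_f^{p'}$ is unambiguous and distributes over the four groups.
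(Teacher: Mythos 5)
Your proof is correct and is essentially the paper's own argument in a cleaner notation: the same splitting of $d\vphi$ into the four groups of terms, the same Leibniz-rule commutator for the terms $\eta_s\,\vphi(\eta_1,\dots)$, the same binomial identity $i\binom{p'}{i}=p'\binom{p'-1}{i-1}$ for the terms $\vphi(\eta_s(f^i)\eta_1,\dots)$, and the same use of differentiability for the term $(f^i\eta_1)\,\vphi(\eta_2,\dots,\eta_{k+1})$. Recasting everything through the finite-difference operator $\nabla_f$ is a nice streamlining --- it makes the monotonicity in $p$ and the iterated commutator immediate, where the paper verifies these by explicit sum manipulations --- but the underlying computation is identical.
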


\begin{proof}
Let $\vphi \in C_{\GF}^k(V, M)$ and let $p\in \N$ such that $\sum_{i=0}^p (-1)^i \binom{p}{i} f^{p-i} \vphi(f^i \eta_1, \dots, \eta_k)=0$. First note that if such a $p$ exists for $\vphi$, then this sum is also zero for every $q \geq p$:
\begin{align*}
\sum_{i=0}^{p+1}(-1)^{i} \binom{p+1}{i} f^{p+1-i} \vphi(f^i \eta_1, \dots, \eta_k) & = f \sum_{i=0}^{p}(-1)^{i} \binom{p}{i} f^{p-i} \vphi(f^i \eta_1, \dots, \eta_k) \\ & \quad - \sum_{i=0}^{p}(-1)^{i} \binom{p}{i} f^{p-i} \vphi(f^i (f\eta_1), \dots, \eta_k).
\end{align*}
Thus we can assume without a loss of generality that $M$ is $(p+1)$-differentiable.
Consider arbitrary $f \in A, \ \eta_1, \dots, \eta_{k+1} \in V$. The differential behaves as follows:
\begin{align}\label{equation:equation1}
\begin{split}
(d\vphi)(f^i \eta_1, \eta_2, \dots, \eta_{k+1}) 
&= \sum_{2 \leq s < t \leq k+1 }(-1)^{s+t} \,\vphi (f^i \eta_1,[\eta_s, \eta_t], \eta_2, \dots,\hat{\eta}_s, \dots, \hat{\eta}_t, \dots, \eta_{k+1}) \\
& \quad + \sum_{2 \leq s \leq k+1} (-1)^s \,\vphi (f^i [\eta_1,\eta_s], \eta_2, \dots, \hat{\eta}_s, \dots, \eta_{k+1}) \\
& \quad + \sum_{2 \leq s \leq k+1 }(-1)^{s} \,\eta_s \,\vphi(f^i\eta_1, \dots, \hat{\eta}_s, \dots, \eta_{k+1}) \\
& \quad - \sum_{2 \leq s \leq k+1 }(-1)^{s}  \,\vphi(\eta_s(f^i) \eta_1, \eta_2, \dots, \hat{\eta}_s, \dots, \eta_{k+1}) \\
&\quad - (f^i\eta_1) \vphi(\eta_2, \dots,  \eta_{k+1}). 
\end{split}
\end{align}
When we consider the summation $\sum_{i=0}^{p+1} (-1)^i \binom{p+1}{i} f^{p+1-i} (d\vphi)(f^i \eta_1, \eta_2, \dots, \eta_{k+1})$, the first two sums in (\ref{equation:equation1}) will be zero as $\vphi$ is a Gelfand-Fuks cochain. The last sum is also zero since $M$ is $(p+1)$-differentiable.
Let us show that for each of the remaining terms the summation will result in zero for each $s$. For the third term we get:
\begin{align*}
\sum_{i=0}^{p+1} (-1)^i \binom{p+1}{i}& f^{p+1-i} 
\eta_s \,\vphi(f^i\eta_1, \dots, \hat{\eta}_s, \dots, \eta_{k+1}) \\
&= \eta_s \left( \sum_{i=0}^{p+1} (-1)^i \binom{p+1}{i} f^{p+1-i} 
\vphi(f^i\eta_1, \dots, \hat{\eta}_s, \dots, \eta_{k+1}) \right) \\
&- \sum_{i=0}^{p+1} (-1)^i \binom{p+1}{i} \eta_s(f^{p+1-i}) 
 \,\vphi(f^i\eta_1, \dots, \hat{\eta}_s, \dots, \eta_{k+1}).
\end{align*}
The first part is zero since $\vphi$ is a Gelfand-Fuks cochain. For the second part:
\begin{align*}
- \eta_s(f) \sum_{i=0}^{p+1} (-1)^i \binom{p+1}{i} (p+1-i) f^{p-i} 
 \,\vphi(f^i\eta_1, \dots, \hat{\eta}_s, \dots, \eta_{k+1}) \\
= - (p+1) \eta_s(f) \sum_{i=0}^{p} (-1)^i \binom{p}{i} f^{p-i} 
 \,\vphi(f^i\eta_1, \dots, \hat{\eta}_s, \dots, \eta_{k+1}),
\end{align*}
which also vanishes. Now for the fourth term in (\ref{equation:equation1}):
\begin{align*}
&\sum_{i=0}^{p+1} (-1)^i \binom{p+1}{i} f^{p+1-i} 
\vphi(\eta_s(f^i) \eta_1, \eta_2, \dots, \hat{\eta}_s, \dots, \eta_{k+1}) \\
&= \sum_{i=0}^{p+1} (-1)^i i \binom{p+1}{i} f^{p+1-i} 
\vphi(f^{i-1} \eta_s(f) \eta_1, \eta_2, \dots, \hat{\eta}_s, \dots, \eta_{k+1}) \\
&= (p+1) \sum_{j=0}^{p} (-1)^{j+1} \binom{p}{j} f^{p-j} 
\vphi(f^{j} \eta_s(f) \eta_1, \eta_2, \dots, \hat{\eta}_s, \dots, \eta_{k+1}),
\end{align*}
which is again zero by the Gelfand-Fuks condition.

Thus $d\vphi \in C^{k+1}_{\GF}(V, M)$ so $\{C_{\GF}^k(V, M), d\}$ is itself a complex. 
\end{proof}

\begin{definition}\label{definition:GelfandFukscohomology}
The cohomology of the subcomplex $\{C^k_{\GF}(V, M)\}$ is called the \emph{algebraic Gelfand-Fuks cohomology} of $V$ with coefficients in $M$, denoted by $H_{\GF}^*(V, M)$.  
\end{definition}

Conceptually, Gelfand-Fuks cochains are differential operators
$\bigwedge^k V \rightarrow M$ (in the sense of Grothendieck \cite{Grothendieck}). The Gelfand-Fuks complex introduced above is an algebraic analogue of the diagonal complex defined by Gelfand-Fuks \cite{GelfandFuks} in the context of $\mathcal{C}^\infty$ manifolds. In the diagonal complex the value of a cochain at a point depends only on the values at that point of a finite number of partial derivatives of the arguments. We shall make use of the jets of vector fields to see that algebraic Gelfand-Fuks cochains possess the same property.

\section{Rinehart enveloping algebras and jets of vector fields}\label{section:background}

In this section, we review the results on jets of vector fields in \'etale charts from  \cite{BilligIngalls}. For a Lie algebra $V$, we denote by $\U(V)$ the universal enveloping algebra. First, recall the definition of the enveloping algebra of a Rinehart pair.

\begin{definition}
For a Rinehart pair $(A, V)$, the \emph{weak Rinehart enveloping algebra} is the smash product of $A$ with $\U(V)$, denoted $A \sp \U(V)$. 
\end{definition}

Here, the smash product is the vector space $A \otimes_{\k} \U(V)$, with defining relations
\[
f \sp \eta \cdot g \sp \mu = f \eta(g) \sp \mu + fg \sp \eta \mu
\ \ \ \ \text{for} \  f,g \in A, \eta, \mu \in V.
\]

\begin{definition}
For a Rinehart pair $(A, V)$, the \emph{strong Rinehart enveloping algebra} $D(A,V)$ is the quotient of $A \sp \U(V)$ by the ideal $\left< f \sp \eta - 1 \sp (f\eta) \, | \, f\in A, \eta \in V \right>$. 
\end{definition}

In the context of vector fields on a smooth algebraic variety, $D(A,V)$ is the algebra $D$ of differential operators. 

We call an $AV$-module $M$ \emph{strong}, if it is a module for $D(A,V)$. Equivalently, $M$ is a 1-differentiable $AV$-module. These are $D$-modules in the context of algebraic varieties.

It is an important fact that the subspace $A \otimes_{\k} V \subset A \sp \U(V)$, denoted by $A \sp V$, is a Lie subalgebra with Lie bracket:
\begin{align*}
[f \sp \eta, g \sp \mu] &=  f \eta(g) \sp \mu - g \mu(f) \sp \eta  + fg \sp [\eta, \mu].
\end{align*}

In fact, $(A, A \sp V)$ is again a Rinehart pair, and $D(A, A\sp V) \cong A \sp U(V)$ \cite[Lemmas 6-7]{BilligIngalls}.
This implies that every $AV$-module $M$ is a strong module over the pair
$(A, A \sp V)$.

In the context of vector fields on an affine variety $X$, we will call $A \sp V$ the \emph{Lie algebra of polynomial jets of vector fields on $X$}.
A completion of this Lie algebra will yield the $\infty$-jets of vector fields.

To construct this completion, 
consider the kernel $\Delta \subset A \otimes_\k A$ of the multiplication map m$: A \otimes_\k A \rightarrow A$. The completion of the tensor product $A \otimes_\k A$ (see \cite{AtiyahM} for details on completions), called the \emph{algebra of jets of functions}, is defined as
\[
A \widehat{\otimes} A  = \varprojlim\limits_s (A \otimes_\k A) /\Delta^s.
\]

As $\Delta^k \otimes_A V$ are ideals of Lie algebra $A \sp V$, we can define the completion of $A \sp V$. 

\begin{definition}\label{definition:algebrajets}
The \emph{Lie algebra of jets of vector fields} is 
\[
A \widehat{\sp} V = \varprojlim\limits_s (A \sp V)\,/\,(\Delta^s \otimes_A V).
\] 
\end{definition}

Since $\mathop\cap\limits_s \Delta^s \otimes_A V = (0)$, $A \sp V$ is a Lie subalgebra in $A \widehat{\sp} V$.

By Lemma 12(3) from \cite{BilligIngalls}, the ideal $\Delta^s$ is spanned by the elements
\[
(g \otimes 1) (1 \otimes f_1 - f_1 \otimes 1) \cdot \ldots \cdot (1 \otimes f_s - f_s \otimes 1)
\]
with $g, f_1, \ldots, f_s \in A$. A linearization argument shows that this ideal is also spanned by the elements of the form
\[
(g \otimes 1) (1 \otimes f - f \otimes 1)^s.
\]
We conclude that $\Delta^s \otimes_A V$ is spanned by the elements 
\[
\sum_{i=0}^s (-1)^i \binom{s}{i} g f^{s-i} \# f^i \eta,
\]
with $f, g \in A, \eta \in V$.
Thus an $AV$-module $M$ is $s$-differentiable if and only if it is annihilated by $\Delta^s \otimes_A V$. This implies that differentiable $AV$-modules are also modules for the Lie algebra of jets of vector fields $A \widehat{\sp} V$.

Note that we have Lie subalgebra $1 \sp V \cong V$ in $A \sp V$. 
We will now show that the cohomology of $V$ is the same as the $A$-linear cohomology of $A \sp V$. 


For a Rinehart pair $(A, V)$ and a strong $AV$-module $M$,
consider the complex $\{C_A^k(V, M) = \Hom_A(\bigwedge^k_A V, M), d\}$ of $A$-linear cochains with the usual differential maps. We call the cohomology of this complex the \emph{$A$-linear cohomology of $V$ with coefficients in $M$}. The following Lemma shows that this complex is well-defined.
\begin{lemma}
Let $M$ be a strong $AV$-module, and let $\vphi \in C_A^k(V, M)$. Then 
$d\vphi \in C_A^{k+1}(V, M)$.
\end{lemma}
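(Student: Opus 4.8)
The plan is to show that $d\vphi$, which is automatically $\k$-multilinear and alternating as an element of $C^{k+1}(V,M)$, is in addition $A$-linear in each of its arguments. Since an alternating $\k$-multilinear map that is $A$-linear in one slot is $A$-linear in every slot (any slot can be transposed into the first at the cost of a sign), it suffices to verify
\[
(d\vphi)(f\eta_1, \eta_2, \dots, \eta_{k+1}) = f\,(d\vphi)(\eta_1, \eta_2, \dots, \eta_{k+1})
\]
for all $f \in A$ and $\eta_1, \dots, \eta_{k+1} \in V$.

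First I would expand the left-hand side. This is exactly the specialization $i=1$ of the identity \eqref{equation:equation1}, so the five groups of terms are available without recomputation. In each term I would extract $f$ using the three structural relations at our disposal: the $A$-linearity of $\vphi$ (to pull $f$ out of any argument slot of $\vphi$), the Rinehart relation \eqref{equation:LieRinehartpair} in the form $[f\eta_1, \eta_s] = f[\eta_1,\eta_s] - \eta_s(f)\eta_1$ (for the bracket terms involving the first slot), and the Leibniz rule \eqref{equation:Leibniz} (for the action terms $\eta_s\,\vphi(\dots)$ with $s \geq 2$). Crucially, for the single action term with $s=1$, namely $-(f\eta_1)\,\vphi(\eta_2,\dots,\eta_{k+1})$, I would invoke the strong hypothesis $(f\eta_1)m = f(\eta_1 m)$ to rewrite it as $-f\,\eta_1\vphi(\eta_2,\dots,\eta_{k+1})$.

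After these substitutions every term splits into a piece equal to $f$ times the corresponding term of $(d\vphi)(\eta_1,\dots,\eta_{k+1})$, together with two families of error terms: those of the shape $\eta_s(f)\eta_1$ produced by the Rinehart relation in the bracket sum (the fourth sum of \eqref{equation:equation1}), and those produced by the Leibniz rule applied to the action terms with $s\geq 2$. The final step is to observe that these two families cancel termwise: after using $A$-linearity of $\vphi$, both are indexed by $s\geq 2$ and carry the common factor $\eta_s(f)\,\vphi(\eta_1,\dots,\hat{\eta}_s,\dots,\eta_{k+1})$, and they occur with opposite signs. What remains is exactly $f\,(d\vphi)(\eta_1,\dots,\eta_{k+1})$, as desired.

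The computation is entirely routine bookkeeping; the one genuinely load-bearing step — and the reason the hypothesis that $M$ be \emph{strong} rather than merely differentiable cannot be dropped here — is the treatment of the $s=1$ action term, where $(f\eta_1)m = f(\eta_1 m)$ is used. For a general differentiable module this identity fails, which is precisely why the analogous statement for non-strong modules is phrased at the level of the Gelfand-Fuks subcomplex (with the higher powers $f^i$ and the $(p+1)$-differentiability condition) rather than as genuine $A$-linearity.
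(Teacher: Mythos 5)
Your proof is correct and follows essentially the same route as the paper's: a direct expansion of the Chevalley--Eilenberg differential in which the Rinehart relation, the $A$-linearity of $\vphi$, the Leibniz rule, and (at exactly one load-bearing point) the strong $D$-module property $(f\eta)m = f(\eta m)$ are used, with the two families of $\eta_s(f)$ cross terms cancelling. The only difference is bookkeeping: the paper places multipliers $f_1, \dots, f_{k+1}$ on all slots simultaneously (and so invokes the strong property in every action term), whereas you check $A$-linearity in the first slot only and upgrade by antisymmetry, which is a legitimate and slightly leaner variant of the same computation.
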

\begin{proof}
\begin{align*}
&d\vphi(f_1 \eta_1, \ldots, f_{k+1} \eta_{k+1}) \\
&= \sum_{1 \leq t \leq k+1} (-1)^t f_t \eta_t \, \vphi(f_1 \eta_1, \ldots, \widehat{f_t \eta_t}, \ldots, f_{k+1} \eta_{k+1}) \\
& \quad + \sum_{1 \leq s < t \leq k+1} (-1)^{s+t-1} \vphi( [f_s \eta_s, f_t \eta_t], f_1 \eta_1, \ldots, \widehat{f_s \eta_s}, \ldots, \widehat{f_t \eta_t}, \ldots, f_{k+1} \eta_{k+1}) \\
&= \sum_{1 \leq t \leq k+1} (-1)^t f_t \, \eta_t \left( f_1 \ldots \hat{f}_t \ldots f_{k+1} \, \vphi(\eta_1, \ldots, \widehat{\eta_t}, \ldots, \eta_{k+1}) \right)\\
& \quad + \sum_{1 \leq s < t \leq k+1} (-1)^{s+t-1} \vphi( f_s f_t [\eta_s, \eta_t], f_1 \eta_1, \ldots, \widehat{f_s \eta_s}, \ldots, \widehat{f_t \eta_t}, \ldots, f_{k+1} \eta_{k+1}) \\
& \quad + \sum_{1 \leq s < t \leq k+1} (-1)^{s+t-1} \vphi( f_s \eta_s(f_t) \eta_t, f_1 \eta_1, \ldots, \widehat{f_s \eta_s}, \ldots, \widehat{f_t \eta_t}, \ldots, f_{k+1} \eta_{k+1}) \\
& \quad - \sum_{1 \leq s < t \leq k+1} (-1)^{s+t-1} \vphi( f_t \eta_t(f_s) \eta_s, f_1 \eta_1, \ldots, \widehat{f_s \eta_s}, \ldots, \widehat{f_t \eta_t}, \ldots, f_{k+1} \eta_{k+1}) \\
&= \sum_{1 \leq t \leq k+1} (-1)^t f_1 \ldots f_{k+1} \, \eta_t \, \vphi(\eta_1, \ldots, \widehat{\eta_t}, \ldots, \eta_{k+1})\\
& \quad + \sum_{1 \leq t \leq k+1} (-1)^t f_t \,\eta_t (f_1 \ldots \hat{f}_t \ldots f_{k+1}) \, \vphi(\eta_1, \ldots, \widehat{\eta_t}, \ldots, \eta_{k+1})\\
& \quad + \sum_{1 \leq s < t \leq k+1} (-1)^{s+t-1} f_1 \ldots f_{k+1} \, \vphi( [\eta_s, \eta_t], \eta_1, \ldots, \widehat{\eta_s}, \ldots, \widehat{ \eta_t}, \ldots, \eta_{k+1}) \\
&\quad + \sum_{1 \leq t \leq k+1} \sum_{s \neq t} (-1)^{t-1} \eta_t (f_s) f_1 \ldots \hat{f}_s \ldots f_{k+1} \, \vphi(\eta_1, \ldots, \widehat{\eta_t}, \ldots, \eta_{k+1})\\
&= f_1 \ldots f_{k+1} \,d \vphi(\eta_1, \ldots, \eta_{k+1}).
\end{align*}    
\end{proof}

Consider now $A$-linear cohomology of $A \sp V$.
By definition, the cochains $C_A^k(A \sp V, M)$ can be viewed as the cochains $\vphi \in C^k(A \sp V, M)$ such that 
\[
\vphi(f_1 \sp \eta_1, \dots, f_k \sp \eta_k) = f_1 \cdots f_k\,\vphi(1\sp\eta_1, \dots, 1\sp\eta_k).
\] 
Since $1 \sp V$ is a subalgebra of $A \sp V$, we can restrict any cochain $\psi \in C_A^k(A \sp V, M)$ to a cochain in $C^k(1 \sp V, M)$. 
This gives us the restriction map
\begin{align*}
\overline{\,\cdot\,}: C_A^k(A \sp V,M) \rightarrow  C^k(V,M).
\end{align*}
We can also lift cochains from $C^k(V,M)$ to $C_A^k(A \sp V,M)$ by $A$-linearity, yielding the map
\begin{align*}
\widetilde{\cdot}: C^k(V,M) \rightarrow  C_A^k(A \sp V,M),
\end{align*}
defined by $\hphi(f_1\sp \eta_1, \dots, f_k \sp \eta_k) = (f_1 \cdots f_k) \,\vphi(\eta_1, \dots, \eta_k)$. 

\begin{proposition}
\label{Alinearity}
The cochain complexes  $C^*(V,M)$ and $C_A^*(A \sp V,M)$ are isomorphic with 
the isomorphisms given by the lift and restriction maps.
\end{proposition}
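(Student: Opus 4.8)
The plan is to verify directly that the lift and restriction maps are mutually inverse isomorphisms of graded vector spaces, and then to check that one of them (say the restriction) commutes with the differentials; since the two maps are inverse to each other, this automatically makes both of them isomorphisms of cochain complexes. I would carry this out in three steps.

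First I would confirm that both maps are well defined between the stated cochain spaces. For the restriction map $\overline{\,\cdot\,}$, the composition of an $A$-linear cochain $\psi \in C_A^k(A\sp V, M)$ with the $\k$-linear embedding $\eta \mapsto 1\sp\eta$ of $V$ into $A\sp V$ is an alternating $\k$-multilinear form on $V$, so $\overline\psi \in C^k(V,M)$. For the lift $\widetilde{\cdot}$, the assignment $\hphi(f_1\sp\eta_1,\dots,f_k\sp\eta_k) = (f_1\cdots f_k)\vphi(\eta_1,\dots,\eta_k)$ is separately $\k$-linear in each $f_i$ and in each $\eta_i$, hence well defined on $A\sp V = A\otimes_\k V$ in each slot; it is alternating because $\vphi$ is and the product $f_1\cdots f_k$ is symmetric; and it satisfies $\hphi(g f_1 \sp \eta_1, \dots) = g\,\hphi(f_1\sp\eta_1,\dots)$, so it is $A$-multilinear, i.e. $\hphi \in C_A^k(A\sp V, M)$.

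Next I would check that the two maps are inverse. On one side, $\overline{\hphi}(\eta_1,\dots,\eta_k) = \hphi(1\sp\eta_1,\dots,1\sp\eta_k) = \vphi(\eta_1,\dots,\eta_k)$, so $\overline{\widetilde{\vphi}} = \vphi$. On the other side, for $\psi \in C_A^k(A\sp V, M)$ the $A$-linearity of $\psi$ gives $\widetilde{\overline\psi}(f_1\sp\eta_1,\dots,f_k\sp\eta_k) = (f_1\cdots f_k)\psi(1\sp\eta_1,\dots,1\sp\eta_k) = \psi(f_1\sp\eta_1,\dots,f_k\sp\eta_k)$; since both sides are $\k$-multilinear, they agree on all of $\bigwedge^k_A(A\sp V)$, so $\widetilde{\overline\psi}=\psi$. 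The $A$-linearity defining $C_A^k$ is the only property of $\psi$ used here.

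Finally I would show that the restriction map is a chain map, $\overline{d\psi} = d\,\overline\psi$. The point is that $1\sp V \cong V$ is a Lie subalgebra on which the structures match: from the bracket formula for $A\sp V$ one has $[1\sp\eta_s, 1\sp\eta_t] = 1\sp[\eta_s,\eta_t]$, and the action of $A\sp V$ on $M$ restricts to $(1\sp\eta)m = \eta\,m$. Evaluating the Chevalley-Eilenberg differential of $\psi$ on arguments $1\sp\eta_1,\dots,1\sp\eta_{k+1}$ and substituting these two identities turns it term by term into the Chevalley-Eilenberg differential of $\overline\psi$ on $\eta_1,\dots,\eta_{k+1}$, giving $\overline{d\psi}(\eta_1,\dots,\eta_{k+1}) = (d\,\overline\psi)(\eta_1,\dots,\eta_{k+1})$. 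Combined with the previous step, this shows that restriction and lift are inverse isomorphisms of cochain complexes. I expect no genuine obstacle; the only thing to keep straight is which Rinehart structure is in play---the module action entering the differential on $C_A^*(A\sp V, M)$ is the one coming from the strong $(A, A\sp V)$-module structure on $M$---and the work reduces to confirming that this action restricts to the original $V$-action along $1\sp V$, which is exactly the content of the two identities above.
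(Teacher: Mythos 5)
Your proposal is correct and follows the same route as the paper: the paper's proof simply observes that lift and restriction are mutually inverse and that it then suffices to check $\overline{d\psi} = d\,\overline{\psi}$ for the restriction, omitting the (straightforward) details which you spell out. Your verification of those details — including the key identities $[1\sp\eta, 1\sp\mu] = 1\sp[\eta,\mu]$ and $(1\sp\eta)m = \eta\, m$ coming from the strong $(A, A\sp V)$-module structure — is exactly what the paper leaves implicit.
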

\begin{proof}
It is clear that the lift and restriction maps are the inverses of each other.
We need to prove that these maps commute with the differentials. Since the lift and the restriction are the inverses of each other, it is sufficient to check this property for either one of them. Checking that 
$\overline{d \psi} = d \, \overline{\psi}$ is completely straightforward and we omit the details.
\end{proof}

\begin{corollary}\label{corollary:tildebijective}
For a Rinehart pair $(A, V)$ and an $AV$-module $M$, we have an isomorphism of cohomologies 
$H^*(V, M) \cong H_A^*(A \sp V, M)$.  
\end{corollary}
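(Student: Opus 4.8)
The plan is to obtain the corollary as an immediate consequence of Proposition \ref{Alinearity}. That proposition already does all the work: it exhibits the lift map $\widetilde{\cdot}$ and the restriction map $\overline{\,\cdot\,}$ as mutually inverse bijections between the cochain spaces $C^k(V, M)$ and $C_A^k(A\sp V, M)$ in each degree $k$, and it verifies that these maps commute with the Chevalley--Eilenberg differentials. Together these two facts say precisely that $\widetilde{\cdot}$ and $\overline{\,\cdot\,}$ are mutually inverse isomorphisms of cochain complexes. All that remains is to pass from an isomorphism of complexes to an isomorphism of their cohomologies, which is a purely formal step.

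First I would recall the elementary homological fact that any chain map $g\colon (C^*, d) \to (D^*, d)$ descends to a well-defined map $g_*\colon H^*(C^*) \to H^*(D^*)$, since commuting with the differential forces $g$ to carry cocycles to cocycles and coboundaries to coboundaries. Applying this to the lift map yields $\widetilde{\cdot}_*\colon H^*(V, M) \to H_A^*(A\sp V, M)$, and applying it to the restriction map yields a map $\overline{\,\cdot\,}_*$ in the opposite direction. Here one uses that $M$, being an $AV$-module, is automatically a strong module over the pair $(A, A\sp V)$, so that the $A$-linear complex $C_A^*(A\sp V, M)$ and hence its cohomology are well-defined.

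Next I would invoke functoriality of the passage to cohomology: since $\overline{\,\cdot\,}\circ\widetilde{\cdot}$ and $\widetilde{\cdot}\circ\overline{\,\cdot\,}$ are the respective identity chain maps by Proposition \ref{Alinearity}, the induced composites $\overline{\,\cdot\,}_*\circ\widetilde{\cdot}_*$ and $\widetilde{\cdot}_*\circ\overline{\,\cdot\,}_*$ are the identities on cohomology. Hence $\widetilde{\cdot}_*$ is an isomorphism with inverse $\overline{\,\cdot\,}_*$, giving the claimed isomorphism $H^*(V, M) \cong H_A^*(A\sp V, M)$. Since Proposition \ref{Alinearity} supplies the only nontrivial input, namely the compatibility of the lift and restriction maps with the differentials, I do not anticipate any genuine obstacle in this argument; the corollary is a formal consequence of having an isomorphism already at the level of complexes.
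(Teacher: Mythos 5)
Your proposal is correct and matches the paper's own treatment: the corollary is stated there precisely as a formal consequence of Proposition \ref{Alinearity}, with the isomorphism of complexes (lift and restriction being mutually inverse chain maps) descending to an isomorphism of cohomologies. You also correctly identify the one implicit hypothesis check, namely that any $AV$-module is automatically a strong module over the pair $(A, A\sp V)$, so the $A$-linear complex on the right-hand side is well-defined.
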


Thus by studying the $A$-linear cohomology of the Lie algebra $A \sp V$, we get the cohomology of $V$. 

Next, let us see what is the image of the Gelfand-Fuks subcomplex in $C_A^*(A \sp V, M)$. Let $\vphi$ be a Gelfand-Fuks cochain. Then its lift satisfies the identity
\[
\hphi \left( 
 g_1 \sum_{i=0}^p (-1)^i \binom{p}{i} f^{p-i} \# f^i \eta_1, \, g_2 \# \eta_2, \dots, g_k \# \eta_k
\right) = 0,
\]
so $\hphi$ vanishes when one of its arguments belongs to $\Delta^p \otimes_A V$.

\begin{definition}
A cochain $\psi \in C_A^k (A \sp V, M)$ is called \emph{finite} if there exists $p \in \N$ such that $\psi$ vanishes when one of its arguments belongs to the ideal $\Delta^p \otimes_A V$.
\end{definition}

We conclude that the lifts of Gelfand-Fuks cochains from $C^k(V,M)$ to $C_A^k(A \sp V,M)$ are precisely finite cochains. Thus finite cochains form a subcomplex $C_{A,\text{fin}}^*(A \sp V,M)$, and this complex is isomorphic 
to the Gelfand-Fuks complex $C_\text{GF}^*(V,M)$.

Finite cochains may be extended to the completed algebra $A \widehat{\sp} V$ 
(Definition \ref{definition:algebrajets}).
We call a cochain $\psi$ in $C_A^k (A \widehat{\sp} V, M)$ finite if there exists $p \in \N$ such that $\psi$ vanishes when
one of its arguments is of the form $\sum_{i=p}^\infty x_i$ 
with $x_i \in \Delta^i \otimes_A V$. We immediately see that complexes
$C_{A,\text{fin}}^*(A \sp V,M)$ and $C_{A,\text{fin}}^*(A \widehat{\sp} V,M)$
are isomorphic. As a consequence we get the following result:
\begin{proposition}
\label{isoGF}
Let $M$ be an $N$-differentiable $AV$-module. Then
\[ 
H_\text{GF}^*(V,M) \cong H_{A, \text{fin}}^* (A \sp V, M)
\cong H_{A, \text{fin}}^* (A \widehat{\sp} V, M)
\cong \varinjlim\limits_{k \geq N} H_{A}^* (A \sp V / (\Delta^k \otimes_A V), M).
\]
\end{proposition}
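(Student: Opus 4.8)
The plan is to assemble the proposition from two isomorphisms of cochain complexes, both of which are already implicit in the discussion above, and then to pass to cohomology; no new computation beyond routine verifications is required.

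For the first isomorphism, I would argue that the mutually inverse chain maps $\overline{\,\cdot\,}$ and $\widetilde{\cdot}$ of Proposition \ref{Alinearity} restrict to a pair of mutually inverse isomorphisms between the subcomplexes $C_{\GF}^*(V,M)$ and $C_{A,\fin}^*(A \sp V, M)$. One inclusion is already recorded: the lift $\hphi$ of a Gelfand-Fuks cochain $\vphi$ vanishes whenever one of its arguments lies in $\Delta^p \otimes_A V$, so $\hphi$ is finite. For the converse I would take a finite cochain $\psi$, evaluate it on the spanning element $\sum_{i=0}^p (-1)^i \binom{p}{i} f^{p-i} \sp f^i \eta_1$ of $\Delta^p \otimes_A V$ (the case $g=1$), and use $A$-linearity to pull out the scalars $f^{p-i}$; this turns the vanishing of $\psi$ into exactly the Gelfand-Fuks condition $\sum_{i=0}^p (-1)^i \binom{p}{i} f^{p-i} \overline{\psi}(f^i \eta_1, \eta_2, \dots, \eta_k) = 0$ for its restriction. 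Thus the lift and restriction maps match the two subcomplexes bijectively; since restricting a chain isomorphism to matched subcomplexes is again a chain isomorphism and cohomology is functorial, the first displayed isomorphism follows.

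For the second isomorphism I would use that a finite cochain factors through a quotient of bounded order. Concretely, a cochain in $C_{A,\fin}^k(A \sp V, M)$ that vanishes on $\Delta^p \otimes_A V$ is the same datum as an $A$-linear cochain on the quotient Lie algebra $(A \sp V)/(\Delta^p \otimes_A V)$; likewise, the finiteness condition defining $C_{A,\fin}^k(A \widehat{\sp} V, M)$ -- vanishing on every $\sum_{i \geq p} x_i$ with $x_i \in \Delta^i \otimes_A V$ -- says precisely that the cochain factors through the same quotient, since by Definition \ref{definition:algebrajets} the algebra $A \widehat{\sp} V$ is the inverse limit of exactly these quotients. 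Taking the union over $p$ therefore identifies the two finite complexes via the canonical map $A \sp V \to A \widehat{\sp} V$. This identification is a chain map because the Chevalley-Eilenberg differential on both sides is given by the same formula and the canonical map is a morphism of Rinehart pairs compatible with the $M$-action, where I use that a differentiable $M$ is a module over $A \widehat{\sp} V$, as noted after Definition \ref{definition:algebrajets}. Passing to cohomology yields the second displayed isomorphism.

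The only genuinely delicate point is the bookkeeping in the second step: one must confirm that the two a priori different finiteness conditions -- vanishing on the honest ideal $\Delta^p \otimes_A V$ versus vanishing on all tails $\sum_{i \geq p} x_i$ in the completion -- cut out exactly the same cochains, namely those pulled back from the order-$p$ quotient. I expect this to be the main obstacle, though still a routine one; everything else is a direct consequence of Proposition \ref{Alinearity} and the definitions of finite and Gelfand-Fuks cochains.
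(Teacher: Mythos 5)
Your proposal is correct and follows essentially the same route as the paper: both identify the Gelfand--Fuks complex with the finite cochains on $A \sp V$ by restricting the lift/restriction isomorphism of Proposition \ref{Alinearity} (using the spanning elements $\sum_{i}(-1)^i\binom{p}{i}f^{p-i}\sp f^i\eta$ of $\Delta^p\otimes_A V$ together with $A$-linearity), and both then identify finite cochains on $A\sp V$ with finite cochains on $A\widehat{\sp}V$, your factorization through the quotients $(A\sp V)/(\Delta^p\otimes_A V)$ being exactly the formal content behind the paper's ``we immediately see'' step. The only refinement worth noting is that in the quotient argument one should enlarge $p$ so that $\Delta^p\otimes_A V$ also annihilates $M$ (possible since $M$ is differentiable), so that the differential is defined at the level of the quotient; this is harmless because finiteness only asserts the existence of some $p$.
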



The Lie algebra of jets of vector fields $A \widehat{\sp} V$ has a particularly nice description when the variety $X$ has uniformizing parameters, that is, 
$X$ may be covered by a single \'etale chart. Let us recall this construction
(see e.g. \cite{BilligIngalls} for details).

For a variety $X$, denote by $\A$ the sheaf of regular function on $X$ and $\V$ the sheaf of vector fields. 

\begin{definition}
Let $X$ be an affine variety. An \emph{\'{e}tale chart} on $X$ is a Zariski open affine subset $U$ of $X$ such that there exist functions $x_1, \dots, x_n \in \A(U)$ satisfying
\begin{enumerate}[label=(\roman*)]
\item $\{x_1, \dots, x_n\}$ is algebraically independent,
\item Every $f \in \A(U)$ is algebraic over $\k[x_1, \dots, x_n]$,
\item The derivations $\dx{1}, \dots, \dx{n}$ of $\k[x_1, \dots, x_n]$ can be extended to $\A(U)$.
\end{enumerate}
We will call the set $\{x_1, \dots, x_n\}$ the \emph{uniformizing parameters of $U$}. 
\end{definition}
Note that such extensions of $\dx{i}$ are unique and commute with each other. 


For the rest of the section, let $U$ be a \'etale chart and set $A = \A(U)$ and $V = \V(U)$. Then $(A, V)$ is a Rinehart pair.

To see the connection of the completions $A \widehat{\otimes} A$ and
$A \widehat{\sp} V$ to the theory of jets, consider
a homomorphism of commutative algebras $j: A \otimes_\k A \rightarrow A \otimes\k\llbracket t_1, \dots, t_n \rrbracket$ with
\[
j(g \otimes f)  = \sum_{m \in \Z_{+}^n} \frac{1}{m!} g(x) \frac{\partial^m f}{\partial x^m} t^m
\]
(\cite[Lemma 10]{BilligIngalls}). 
The image $j(1 \otimes f)$ is the \emph{jet of the function $f$}. 
In fact, $j$ extends to an isomorphism of commutative algebras $A \widehat{\otimes} A$ and $A \otimes \k \llbracket t_1, \dots, t_n \rrbracket$ \cite[Corollary 18]{BilligIngalls}. 

To give a realization of $A \widehat{\sp} V$, we need to introduce another Lie algebra, $\L_+$. Denote $\L = \Der(\k[X_1, \dots, X_n])$ with the natural $\Z$-grading by total degree. Let $\L_k$ be the $k^\text{th}$ graded component, so that
\[
\L = \L_{-1} \oplus \L_0 \oplus \L_1 \oplus \L_2 \oplus \cdots .
\]
We set $\L_+ = \bigoplus_{i=0}^\infty \L_i$ be the positive part of $\L$ and note that $\L_0 \cong \gl_n$. 
Set $\L_{\geq k} = \bigoplus_{i=k}^\infty \L_i$. We will need a completion of the tensor product:
\[
A \wo \L_+ = \varprojlim\limits_k  A \otimes (\L_+/\L_{\geq k}).
\]


We now state the decomposition of $A \widehat{\sp} V$ proved in \cite{BilligIngalls}. 

\begin{theorem}[{\cite[Theorem 22, Remark 23]{BilligIngalls}}]\label{theorem:BItheorem22}
Let $U$ be an \'etale chart of $X$ with uniformizing parameters $x_1, \dots, x_n$. Let $A = \A(U)$ and $V = \V(U)$, then
\[
A \widehat{\sp} V \cong V \ltimes (A \wo \L_+),
\]
\[
A \sp V / (\Delta^{k+1} \otimes_A V) \cong V \ltimes (A \otimes \L_+/\L_{\geq k}).
\]
When $U \cong \affine^n$, this isomorphism holds without needing completions:
\[
A \sp V \cong V \ltimes (A \otimes \L_+).
\]
\end{theorem}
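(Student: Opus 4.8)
The plan is to let the jet map $j$ do the heavy lifting: first trivialize $A\widehat{\sp}V$ as a space of formal vector fields, then produce the splitting from a canonical projection whose section is dictated by the uniformizing parameters. Since $\Delta^s\otimes_A V$ is by definition a submodule of $(A\widehat{\otimes}A)\otimes_A V$, we have $A\widehat{\sp}V\cong(A\widehat{\otimes}A)\otimes_A V$, and because the \'etale chart makes $V=\Der A$ free over $A$ on the commuting derivations $\partial_{x_1},\dots,\partial_{x_n}$, the algebra isomorphism $j\colon A\widehat{\otimes}A\cong A\otimes\k\llbracket t_1,\dots,t_n\rrbracket$ of \cite{BilligIngalls} yields a vector-space identification
\[
\Phi\colon A\widehat{\sp}V\ \xrightarrow{\ \sim\ }\ A\otimes\widehat{\mathcal{W}}_n,\qquad \widehat{\mathcal{W}}_n:=\prod_{k\geq-1}\L_k=\L_{-1}\oplus\widehat{\L}_+,
\]
where $\widehat{\mathcal{W}}_n$ is the Lie algebra of formal vector fields in $t$ (identifying $X_i\leftrightarrow t_i$), graded by degree in $t$, with $\L_{-1}=\bigoplus_i\k\,\partial_{t_i}$. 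Concretely $\Phi(f\sp\eta)=f\cdot\sum_i j(1\otimes a_i)\,\partial_{t_i}$ for $\eta=\sum_i a_i\partial_{x_i}$: the base-point value of $f$ times the jet of $\eta$.

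Next I would set up the projection and its section. Define $\rho\colon A\widehat{\sp}V\to V$ as the canonical projection of the inverse limit onto the $s=1$ quotient $(A\sp V)/(\Delta\otimes_A V)$, which I identify with $V$ by $f\sp\eta\mapsto f\eta$ (one checks the kernel of this map is exactly the span of $f\sp\eta-1\sp(f\eta)$, namely $\Delta\otimes_A V$). As $\Delta\otimes_A V$ is a Lie ideal, $\rho$ is a surjective Lie homomorphism, and under $\Phi$ it is precisely projection onto the degree $-1$ part $A\otimes\L_{-1}\cong V$. Define a section $s\colon V\to A\widehat{\sp}V$ by $\sum_i f_i\partial_{x_i}\mapsto\sum_i f_i\sp\partial_{x_i}$. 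A direct bracket computation, using the relations $[\partial_{x_i},\partial_{x_j}]=0$ that hold because the $x_i$ are uniformizing parameters, shows that $s$ is a Lie homomorphism with $\rho\circ s=\mathrm{id}_V$. This already exhibits $A\widehat{\sp}V=s(V)\ltimes\ker\rho$ with $s(V)\cong V$.

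It then remains to identify the ideal $\ker\rho$ with $A\otimes\widehat{\L}_+$ carrying the asserted structure. Under $\Phi$, $\ker\rho$ consists of the jets with vanishing degree $-1$ part, i.e. $A\otimes\widehat{\L}_+$. To pin down the bracket I would transport it along $\Phi$. The key identity is $\partial_{x_i}\,j(1\otimes f)=\partial_{t_i}\,j(1\otimes f)$ -- the jet of a function depends on $x$ and $t$ only through $x+t$ -- which lets me separate the bracket of two jets into a formal-vector-field part and a base-point part. Writing $\xi_a=w_a+\zeta_a$ with $w_a\in A\otimes\L_{-1}\cong V$ corresponding to $v_a\in V$ and $\zeta_a\in A\otimes\widehat{\L}_+$, I expect the transported bracket to take the form
\[
[\xi_1,\xi_2]=[v_1,v_2]_V+\big([\zeta_1,\zeta_2]_{\mathcal{W}}+v_1\cdot\zeta_2-v_2\cdot\zeta_1\big),
\]
where $[\cdot,\cdot]_{\mathcal{W}}$ is the $A$-bilinear extension of the bracket of $\widehat{\L}_+$ and $v\cdot$ is the action of $v\in V$ on the $A$-coefficients by derivations. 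Reading off the three types of term gives exactly the semidirect product $V\ltimes(A\otimes\widehat{\L}_+)$: $V$ is a subalgebra, $A\otimes\widehat{\L}_+$ is an ideal with its $A$-bilinear Witt bracket, and $V$ acts through its action on $A$. For the affine case $U\cong\affine^n$ one has $A=\k[x_1,\dots,x_n]$ and the jet $j(1\otimes f)$ of a polynomial is polynomial in $t$, so $j$ already gives $A\otimes_\k A\cong A\otimes\k[t]$ with no completion; then $\widehat{\L}_+$ is replaced by $\L_+$ throughout and the same argument yields $A\sp V\cong V\ltimes(A\otimes\L_+)$.

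The main obstacle is the bracket computation of the third step: one must verify that every degree-lowering contribution -- these arise because the base-point derivation $\partial_{x_i}\otimes 1$ becomes $\partial_{x_i}-\partial_{t_i}$ on $A\otimes\k\llbracket t\rrbracket$, and the $\partial_{t_i}$ drops the $t$-degree by one -- cancels precisely, so that the bracket respects the splitting $\L_{-1}\oplus\widehat{\L}_+$ in the stated semidirect fashion rather than mixing the two summands. A secondary technical point is to justify all of these manipulations in the inverse limit, i.e. to check that the cancellations and the Witt bracket converge termwise in $\widehat{\mathcal{W}}_n$.
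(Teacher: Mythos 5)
You should note at the outset that the paper itself contains no proof of this statement: it is imported verbatim from \cite[Theorem 22, Remark 23]{BilligIngalls}, so the only meaningful comparison is with that reference, and your jet-coordinate reconstruction follows the same route taken there. Your plan is correct. The identification $A \widehat{\sp} V \cong (A\widehat{\otimes}A)\otimes_A V \cong A \otimes \widehat{\mathcal{W}}_n$ is legitimate because $V$ is free over $A$ on the commuting derivations $\partial_{x_i}$ (so $\otimes_A V$ commutes with the inverse limit) together with the cited Corollary 18; the map $f\sp\eta \mapsto f\eta$ is a Lie algebra homomorphism $A\sp V \to V$ precisely because of the Rinehart identity \eqref{equation:LieRinehartpair}, with kernel $\Delta\otimes_A V$; and $\sum_i f_i\partial_{x_i} \mapsto \sum_i f_i\sp\partial_{x_i}$ is a homomorphic section since $[\partial_{x_i},\partial_{x_j}]=0$. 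One observation shrinks your stated ``main obstacle'' considerably: because $\ker\rho$ is the kernel of a Lie homomorphism it is automatically an ideal, and because $s$ is a homomorphic section, the decomposition $A\widehat{\sp}V = s(V)\ltimes \ker\rho$ holds with no computation at all --- the bracket \emph{cannot} mix the two summands, so no cancellation of degree-lowering terms needs to be checked for that purpose. The only content requiring your key identity $\partial_{x_i}\,j(1\otimes f) = \partial_{t_i}\,j(1\otimes f)$ is the identification of the structure carried by $\ker\rho$: that its internal bracket is the $A$-bilinear current bracket of $A\otimes\widehat{\L}_+$, and that $s(V)\cong V$ acts on it by derivations of the $A$-coefficients only. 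Both are short direct computations and do come out as you predict; for instance, using $t_j = j(1\otimes x_j)-x_j$ one finds that $\bigl[f\sp\partial_{x_i},\, \Phi^{-1}(g\,t^m\partial_{t_j})\bigr]$ transports exactly to $f\,\partial_{x_i}(g)\,t^m\partial_{t_j}$, i.e.\ the coefficient derivation action, all jet-level terms cancelling. Your handling of the affine case is also right: for $A=\k[x_1,\dots,x_n]$ one has $j(1\otimes f)=f(x+t)\in A\otimes\k[t]$, and $j$ is onto $A\otimes\k[t]$ since $\k[x,t]=\k[x][x+t]$, so the argument runs without completion and gives $A\sp V \cong V\ltimes(A\otimes\L_+)$. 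Finally, the inverse-limit worry is harmless because every graded component of the bracket of formal vector fields depends on only finitely many components of its arguments, so all identities can be verified degree by degree.
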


We will use this result to compute Gelfand-Fuks cohomology for varieties with a single \'etale chart. To do this, we will need to establish a K\"unneth formula for the semi-direct product  $V \ltimes (A \otimes \L_+/\L_{\geq k})$.  

In Theorem \ref{theorem:intro}, one of the terms in the decomposition is the de Rham cohomology. We revisit this cohomology and show that for the vector fields on an algebraic variety, de Rham cohomology is exactly $A$-linear cohomology of $V$ with coefficients in $A$. 

\section{de Rham Cohomology of an affine variety}\label{section:deRham}

In this section, we will show that the de Rham cohomology of an affine algebraic variety can be realized as the $A$-linear Lie algebra cohomology of vector fields with coefficients in the algebra of functions $A$. For this section, we assume that $X$ is a smooth irreducible affine algebraic variety over $\k$. Set $A$ be the coordinate ring of $X$ and set $V$ be the vector fields on $X$. 

First, we recall the de Rham cohomology of an algebraic variety. View $A \otimes_\k A$ as an $A$-module with the action on the left tensor factor and consider a submodule $I = \langle 1 \otimes fg  - f \otimes g - g \otimes f \,\big|\, f,g \in A\rangle$. Set $\Omega^1 = A \otimes_\k A /I$ and for $f, g \in A$ we use the notation $f d(g)$ for the image of $f \otimes g$ in $\Omega^1$. Let $\Omega^k (A) = \bigwedge^k_A \Omega^1 (A)$. As we take this alternating product over $A$, elements in $\Omega^k$ are written as $f dg_1\wedge \dots \wedge dg_k$ for $f, g_1, \dots, g_k \in A$. We form a complex by defining the de Rham differential $d_{\DR}: \Omega^k(A) \rightarrow \Omega^{k+1}(A)$ by
\[
d_{\DR}(f dg_1 \wedge \cdots \wedge d g_k) = df \wedge dg_1 \wedge \cdots \wedge dg_k. 
\]
for $f \in A, dg_1, \dots, dg_k \in \Omega^1$. Set $\Omega^{k}=0$ for $k < 0$ and $\Omega^0 =A$, then the de Rham cohomology $H^*_{\DR}(X)$ is the cohomology of the de Rham complex $\{ \Omega^i, d_{\DR}\}$:
\[
0 \rightarrow A \rightarrow \Omega^1 \rightarrow \Omega^2 \rightarrow \cdots.
\] 


\begin{theorem}\label{theorem:derhamcohomology}
Let $X$ be a smooth irreducible affine algebraic variety, let $A$ be the coordinate ring of $X$ and let $V$ be the Lie algebra of vector fields on $X$. Then the cochain complex $C_A^* (V, A)$ is isomorphic to the de Rham complex $\Omega^*_X$, and
\[
H^*_A(V,A) \cong H^*_{\DR}(X).
\]
\end{theorem}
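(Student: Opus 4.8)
The plan is to exhibit an explicit degreewise isomorphism $\Phi \colon \Omega^*(A) \to C_A^*(V,A)$ built from the tautological pairing between vector fields and differentials, and then to check that it carries the de Rham differential to the Chevalley-Eilenberg differential. The pairing is $\langle \eta, df\rangle = \eta(f)$ for $\eta \in V$, $f \in A$. Since $\eta \mapsto \eta(f)$ is $A$-linear in $\eta$, and $f \mapsto (\eta \mapsto \eta(f))$ is a derivation of $A$ into the $A$-module $\Hom_A(V, A)$, the universal property of $\Omega^1$ yields a canonical $A$-module map $\Omega^1 \to \Hom_A(V,A) = C_A^1(V,A)$ sending $f\,dg$ to the cochain $\eta \mapsto f\,\eta(g)$. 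In degree $k$ this globalizes to $\Phi$, sending $f\,dg_1 \wedge \cdots \wedge dg_k$ to the $A$-multilinear alternating cochain $(\eta_1, \dots, \eta_k) \mapsto f\,\det\big(\eta_i(g_j)\big)_{i,j}$.

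The first thing to establish is that $\Phi$ is an isomorphism in each degree, and this is precisely where smoothness enters. For a smooth irreducible affine variety of dimension $n$, the module $\Omega^1 = \Omega^1(A)$ is finitely generated projective (locally free of rank $n$), and $V = \Der A$ is canonically its $A$-dual $\Hom_A(\Omega^1, A)$. As $\Omega^1$ is then reflexive, the evaluation map $\Omega^1 \to \Hom_A(V, A)$ above is an isomorphism, giving $\Phi$ in degree $1$. For any finitely generated projective $A$-module $P$ one has the natural isomorphism $\bigwedge^k_A (P^*) \cong (\bigwedge^k_A P)^*$; applying this with $P = V$ and $P^* = \Omega^1$ identifies $\Omega^k(A) = \bigwedge^k_A \Omega^1$ with $\Hom_A(\bigwedge^k_A V, A) = C_A^k(V,A)$, which is exactly $\Phi$ in degree $k$. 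Over a singular $A$ this map can fail to be an isomorphism, so the smoothness hypothesis is genuinely used.

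It remains to verify that $\Phi$ is a morphism of complexes, i.e. $\Phi \circ d_{\DR} = d \circ \Phi$, where $d$ is the Chevalley-Eilenberg differential on $C_A^*(V,A)$ with coefficients in $A$. This is the classical invariant (Cartan) formula for the exterior derivative,
\[
(d_{\DR}\omega)(\eta_1, \dots, \eta_{k+1}) = \sum_{s} (-1)^{s-1}\eta_s\big(\omega(\eta_1, \dots, \hat\eta_s, \dots, \eta_{k+1})\big) + \sum_{s<t}(-1)^{s+t}\omega([\eta_s, \eta_t], \eta_1, \dots, \hat\eta_s, \dots, \hat\eta_t, \dots, \eta_{k+1}),
\]
which matches the Chevalley-Eilenberg differential term by term (up to an overall sign convention that can be absorbed into a degree-dependent rescaling of $\Phi$ and does not affect the conclusion). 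I would check this on the generators $\omega = f\,dg_1 \wedge \cdots \wedge dg_k$, for which $d_{\DR}\omega = df \wedge dg_1 \wedge \cdots \wedge dg_k$ corresponds under $\Phi$ to the $(k+1)\times(k+1)$ determinant with rows indexed by $f, g_1, \dots, g_k$ and columns by the $\eta_s$. Expanding $d(\Phi\omega)$, the Leibniz rule splits each action term $\eta_s(f\det(\cdots))$ into a piece $\eta_s(f)\det(\cdots)$ and a piece $f\,\eta_s(\det(\cdots))$: the first pieces assemble, by cofactor expansion along the $f$-row, into $\Phi(d_{\DR}\omega)$, while the second pieces produce second-order terms $\eta_s\eta_t(g_j)$ that cancel against the bracket contributions via $[\eta_s,\eta_t](g_j) = \eta_s\eta_t(g_j) - \eta_t\eta_s(g_j)$. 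The main obstacle is this bookkeeping — organizing the cofactor/Leibniz expansion and tracking all signs so that the second-derivative terms cancel exactly against the bracket terms — a routine but delicate computation. Granting it, $\Phi$ is an isomorphism of complexes, and $H_A^*(V,A) \cong H_{\DR}^*(X)$ follows.
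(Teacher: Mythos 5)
Your proposal is correct and follows essentially the same route as the paper: the same determinant map $\Phi(f\,dg_1\wedge\cdots\wedge dg_k)(\eta_1,\dots,\eta_k)=f\det(\eta_i(g_j))$, the same isomorphism argument via projectivity of $\Omega^1$ and $V$, reflexivity, and compatibility of $\bigwedge^k$ with $A$-duals, and the same cofactor/Leibniz computation for the chain-map property. Your explicit handling of the sign (the paper's computation likewise yields $d\circ\Phi=\Phi\circ(-d_{\DR})$, absorbed by a degree-dependent rescaling) is if anything slightly more careful than the paper's.
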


\begin{proof}
We need to show that the two complexes are isomorphic. Consider the map $\Phi: \Omega^k(X) \rightarrow \Hom_A \left( \bigwedge^k_A V, A \right)$ by
\[
\Phi(f dg_1 \wedge \cdots \wedge dg_k) (\eta_1 \wedge \cdots \wedge \eta_k) = f \det (\eta_j(g_i))
\]
for $f, g_1, \dots, g_k \in A, \, \eta_1, \dots, \eta_k \in V$. 
We point out that for $\mu \in V$,
\[ 
\mu \left( \Phi(dg_1 \wedge \cdots \wedge dg_k) (\eta_1 \wedge \cdots \wedge \eta_k) \right) =
\sum_{t=1}^k \det \left[ \eta_1(g_j) \cdots \mu(\eta_t (g_j)) \cdots \eta_k(g_j) \right]_{j=1, \ldots, k}.
\]
This equality follows from the fact that $\mu$ is a derivation of $A$. Using this equality, we can show that 
$\Phi$ is a homomorphism of $AV$-modules. Since $\Phi$ is manifestly $A$-linear, we need to check that
\[
\mu \left( \Phi(f dg_1 \wedge \cdots \wedge dg_k) \right) = \Phi \left( \mu(f dg_1 \wedge \cdots \wedge dg_k) \right). 
\]
Indeed,
\begin{align*}
&\mu \left( \Phi(f dg_1 \wedge \cdots \wedge dg_k) \right) (\eta_1 \wedge \cdots \wedge \eta_k) \\
&= \mu \left( f \det (\eta_j(g_i)) \right) - \sum_{t=1}^k  \Phi(f dg_1 \wedge \cdots \wedge dg_k) 
(\eta_1 \wedge \cdots \wedge [\mu, \eta_t] \wedge \cdots \wedge \eta_k) \\
&= \mu(f) \det (\eta_j(g_i)) 
+ f \sum_{t=1}^k \det \left[ \eta_1(g_j) \cdots \mu(\eta_t (g_j)) \cdots \eta_k(g_j) \right]_{j=1, \ldots, k} \\
&-  f \sum_{t=1}^k \det \left[ \eta_1(g_j) \cdots \mu(\eta_t (g_j)) \cdots \eta_k(g_j) \right]_{j=1, \ldots, k}
+  f \sum_{t=1}^k \det \left[ \eta_1(g_j) \cdots \eta_t(\mu (g_j)) \cdots \eta_k(g_j) \right]_{j=1, \ldots, k} \\
&=  \Phi \left( \mu(f dg_1 \wedge \cdots \wedge dg_k) \right) (\eta_1 \wedge \cdots \wedge \eta_k) .
\end{align*}
Next, we are going to show that $d_A \circ \Phi = \Phi \circ (-d_{dR})$. We have
\begin{align*}
\Phi(d_{\DR}(g_1 dg_2 \wedge \cdots \wedge dg_{k+1}))(\eta_1\wedge \cdots \wedge \eta_{k+1}) & = \det (\eta_j(g_{i})).
\end{align*}
 Let $M_{i,j}$ be the determinant of the minor obtained by deleting the $i^\text{th}$ row and the $j^\text{th}$ column in the $(k+1) \times (k+1)$ matrix $(\eta_j (g_i))$. 
  By the cofactor expansion of the determinant along the first row,
\[
\det (\eta_j(g_{i})) = \sum_{j=1}^{k+1} \eta_j(g_1) (-1)^{j+1} M_{1,j}.
\]
If we apply the differential of the Lie algebra cohomology, 
\begin{align*}
& \quad d(\Phi(g_1 dg_2 \wedge \cdots \wedge dg_{k+1}) )(\eta_1 \wedge \cdots \wedge \eta_{k+1}) \\
& = \sum_{1 \leq t \leq k+1} (-1)^t \eta_t \Phi(g_1 dg_2 \wedge \cdots \wedge dg_{k+1})(\eta_1 \wedge \cdots \wedge \hat{\eta}_t \wedge \cdots \wedge \eta_{k+1})\\
& \quad + \sum_{1 \leq s < t \leq k+1} (-1)^{s+t-1} \Phi(g_1 dg_2 \wedge \cdots \wedge dg_{k+1}) ([\eta_s, \eta_t] \wedge \eta_1 \wedge \cdots \wedge \hat{\eta}_s \wedge \cdots \wedge \hat{\eta}_t \cdots \wedge \eta_{k+1})\\
& = \sum_{1 \leq t \leq k+1} (-1)^t \eta_t \left(g_1 M_{1,t} \right) \\ 
& + \sum_{1 \leq s < t \leq k+1} (-1)^{s+t-1} g_1 \det\bigg[ [\eta_s, \eta_t](g_j) \ \eta_1(g_j) \ \ldots \ \hat\eta_{s}(g_j) \ \ldots \ \hat{\eta}_{t}(g_j) \ \ldots \ \eta_{k+1}(g_j)\bigg]_{j = 2, \ldots, k+1}  \\
& = \sum_{1 \leq t \leq k+1} (-1)^t \eta_t \left(g_1 M_{1,t} \right) \\ 
& + \sum_{1 \leq s < t \leq k+1} (-1)^{s-1} g_1 \det\bigg[ \eta_1(g_j) \ \ldots \  \hat\eta_{s}(g_j) \   \ldots \ \eta_{t-1}(g_j)  \ \eta_s (\eta_t(g_j)) \ \eta_{t+1}(g_j) \ \ldots \ \eta_{k+1}(g_j)\bigg]  \\
&+ \sum_{1 \leq s < t \leq k+1} (-1)^{t-1} g_1 \det\bigg[ 
\eta_1(g_j) \ \ldots \ \eta_{s-1}(g_j) \ \eta_t (\eta_s (g_j)) \ \eta_{s+1}(g_j) \ \ldots \ \hat{\eta}_{t}(g_j)  \ \ldots \ \eta_{k+1}(g_j) \bigg]\\
& = \sum_{1 \leq t \leq k+1} (-1)^t \eta_t \left(g_1 M_{1,t} \right) \\ 
&+ \sum_{1 \leq t \leq k+1} \sum_{s \neq t} (-1)^{t-1} g_1 \det\bigg[ 
\eta_1(g_j) \ \ldots \ \eta_{s-1}(g_j) \ \eta_t (\eta_s (g_j)) \ \eta_{s+1}(g_j) \ \ldots \ \hat{\eta}_{t}(g_j)  \ \ldots \ \eta_{k+1}(g_j) \bigg]\\
& = \sum_{1 \leq t \leq k+1} (-1)^t \eta_t \left(g_1 M_{1,t} \right)
+ \sum_{1 \leq t \leq k+1} (-1)^{t-1} g_1 \eta_t \left( M_{1,t} \right) \\
& = \sum_{1 \leq t \leq k+1} (-1)^t \eta_t(g_1) M_{1,t} =  - \det(\eta_j(g_i))\\
& = \Phi(-d_{\DR}(g_1 dg_2 \wedge \cdots \wedge dg_k))(\eta_1\wedge \cdots \wedge \eta_{k+1}).
\end{align*}

We now show that $\Phi$ is an isomorphism. First, consider $\vphi \in \Hom_A(\Omega^1(A), A)$. Since $\Omega^1(A) = A \otimes A / \langle 1 \otimes fg - f \otimes g - g \otimes f \rangle$ as a left-module, then $\vphi$ is determined by its action on $1 \otimes A$. Set $\psi(f) = \vphi(1 \otimes f)$ so that $\psi \in \Hom(A, A)$. Since $\vphi $ is zero on the ideal $\langle 1 \otimes fg - f \otimes g - g \otimes f \rangle$, then $\psi(fg) = f \psi(g) + g \psi(f)$ so $\psi \in \Der(A) = V$. Thus $\Hom_A(\Omega^1, A) \cong V$.

For an $AV$-module $M$, denote by $M^*$ its dual, $M^* = \Hom_A(M, A)$.
 As $V$ and $\Omega^1$ are $AV$-modules which are finitely generated over $A$, then by \cite[Proposition 1.2]{BouazizRocha} they are also projective $A$-modules. Thus by \cite{BourbakiAlgebra}, these modules are both isomorphic to their double dual:
\[
V^{**} \cong V, \quad\quad (\Omega^1)^{**} \cong \Omega^1.
\]
In particular, $\Omega^1 \cong V^*$.

Now by definition, $\Omega^k(A) = \bigwedge^k_A \Omega^1(A)$,
and the alternating product commutes with the dual:
\[
\left( \bigwedge\nolimits^k_A V \right)^* \cong \bigwedge\nolimits^k_A V^*.
\]
Thus $\bigwedge^k_A \Omega^1(A) \cong \bigwedge^k_A V^* \cong \left( \bigwedge^k_A V\right)^*$ and hence $\Omega^k(A) \cong \Hom_A(\bigwedge^k_A V, A)$ so that the two complexes are isomorphic and hence the cohomologies are equal. 
\end{proof}

\section{A K\"unneth formula for a semi-direct product}\label{section:Kunneth}

A K\"unneth formula is the following result about the cohomology of a direct sum:

\begin{theorem}
\label{direct_sum_Kunneth}
Let $\g_1, \g_2$ be Lie algebras, and let $W_1, W_2$ be modules for $\g_1$ and $\g_2$ respectively. Then 
\[
 H^n(\g_1 \oplus \g_2, W_1 \otimes W_2) \cong \sum_{i+j=n} H^i(\g_1, W_1) \otimes H^{j}(\g_2, W_2).
\]
\end{theorem}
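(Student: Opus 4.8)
The plan is to prove this by constructing an explicit quasi-isomorphism between the Chevalley-Eilenberg complex $C^*(\g_1 \oplus \g_2, W_1 \otimes W_2)$ and the tensor product of complexes $C^*(\g_1, W_1) \otimes C^*(\g_2, W_2)$, and then to invoke the algebraic Künneth theorem for complexes of vector spaces over a field. The starting observation is that, as a vector space, $\bigwedge^n(\g_1 \oplus \g_2) \cong \bigoplus_{i+j=n} \bigwedge^i \g_1 \otimes \bigwedge^j \g_2$. Dualizing and pairing with coefficients $W_1 \otimes W_2$, this yields a natural isomorphism of graded vector spaces
\[
C^n(\g_1 \oplus \g_2, W_1 \otimes W_2) \cong \bigoplus_{i+j=n} C^i(\g_1, W_1) \otimes C^j(\g_2, W_2).
\]
I would first make this identification precise at the level of cochains.

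The main step is to verify that under this identification the Chevalley-Eilenberg differential on the left matches the tensor-product differential $d_1 \otimes 1 \pm 1 \otimes d_2$ on the right. Here the key structural fact is that $\g_1$ and $\g_2$ are ideals in $\g_1 \oplus \g_2$ with $[\g_1, \g_2] = 0$, and that the module $W_1 \otimes W_2$ is an outer tensor product, so $\g_1$ acts only on $W_1$ and $\g_2$ only on $W_2$. Because the bracket has no cross terms, when the differential $d$ acts on a decomposable cochain $\vphi_1 \otimes \vphi_2$ the bracket sum splits into a piece acting entirely within the $\g_1$-arguments and a piece acting entirely within the $\g_2$-arguments; similarly the module-action sum splits according to which factor of $W_1 \otimes W_2$ is hit. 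Collecting terms and tracking the Koszul sign $(-1)^i$ on the second summand gives exactly $d(\vphi_1 \otimes \vphi_2) = d_1\vphi_1 \otimes \vphi_2 + (-1)^{\deg \vphi_1}\vphi_1 \otimes d_2 \vphi_2$.

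Once the two complexes are identified as complexes of $\k$-vector spaces, the conclusion follows from the classical Künneth theorem: over a field every module is flat, all $\operatorname{Tor}$ terms vanish, and one obtains
\[
H^n\big(C^*(\g_1, W_1) \otimes C^*(\g_2, W_2)\big) \cong \bigoplus_{i+j=n} H^i(\g_1, W_1) \otimes H^j(\g_2, W_2).
\]
The hard part of this argument is not conceptual but bookkeeping: carefully checking the sign conventions so that the differential decomposes as the tensor-product differential, rather than obtaining a sign discrepancy that would only give an isomorphism up to a rescaling. I expect the main obstacle to be organizing the combinatorics of the shuffle decomposition of $\bigwedge^n(\g_1\oplus\g_2)$ compatibly with the ordering of arguments used in the differential, so I would set up fixed conventions (ordering all $\g_1$-arguments before all $\g_2$-arguments) at the outset to make the sign tracking transparent.
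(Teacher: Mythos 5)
Your proposal breaks at its very first step, and the failure is conceptual, not a matter of sign bookkeeping. The decomposition $\bigwedge^n(\g_1\oplus\g_2)\cong\bigoplus_{i+j=n}\bigwedge^i\g_1\otimes\bigwedge^j\g_2$ is correct, but dualizing it does \emph{not} give the identification you claim: the natural map
\[
\bigoplus_{i+j=n} C^i(\g_1,W_1)\otimes C^j(\g_2,W_2)\longrightarrow C^n(\g_1\oplus\g_2,\,W_1\otimes W_2)
\]
is injective but in general not surjective, because the canonical map $\Hom(U,W_1)\otimes\Hom(V,W_2)\to\Hom(U\otimes V,\,W_1\otimes W_2)$ is onto only under finiteness hypotheses. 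Concretely, take $i=j=1$ and $W_1=W_2=\k$: an element of $C^1(\g_1,\k)\otimes C^1(\g_2,\k)$ is a finite sum $\sum_r f_r\otimes g_r$, hence a bilinear form $\g_1\times\g_2\to\k$ of finite rank, whereas $C^2(\g_1\oplus\g_2,\k)$ contains \emph{all} bilinear forms; if $\g_1\cong\g_2$ has basis $(e_i)_{i\in\N}$, the pairing $B(e_i,e_j)=\delta_{ij}$ is not in the image. So the Chevalley--Eilenberg complex of $\g_1\oplus\g_2$ is strictly larger than the tensor product of the two complexes, the classical K\"unneth theorem for tensor products of complexes cannot be invoked, and what actually has to be proved is that the inclusion of the tensor-product complex is a quasi-isomorphism. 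That is the real content of the theorem, and it is precisely the issue for the Lie algebras this paper needs (vector fields $V$, the algebras $A\otimes\widehat{\L}_+$), all infinite-dimensional; it is also why the authors remark that they could not find the statement in the literature. (Your argument is fine when $\g_1,\g_2$ are finite-dimensional, and the analogous argument for Lie algebra \emph{homology} works in general, since chains, unlike cochains, genuinely form a tensor product of complexes.)

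The paper's own route avoids this trap. Theorem \ref{direct_sum_Kunneth} is deduced as the special case $A=\k$ of the semi-direct product formula, Theorem \ref{semidirect_Kunneth}, which is proved via the Hochschild--Serre spectral sequence associated to the ideal $A\otimes\g$: the $E_2$ term is identified as $H^p_A(V,S)\otimes H^q(\g,W)$, and degeneration at $E_2$ is then forced by Proposition \ref{cohomology_map}. Note that the cross-product $\ast$ of that proposition, specialized to $A=\k$, is exactly your natural map; but the paper uses it only to show that classes in $E_2^{p,q}$ are represented by honest cocycles, which kills the differentials $d_r$ for $r\geq 2$. It never asserts that $\ast$ is surjective, or an isomorphism, at the cochain level. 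To rescue your write-up you would have to prove that your inclusion of complexes is a quasi-isomorphism, and the natural tool for doing so is this spectral-sequence comparison, i.e., the paper's argument.
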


We shall also write the above isomorphism as 
$H^*(\g_1 \oplus \g_2, W_1 \otimes W_2) \cong  
H^*(\g_1, W_1) \otimes H^*(\g_2, W_2)$.

For our purposes, we will need a generalization of the K\"unneth formula for semi-direct products of a particular kind, which can be applied to the Lie algebra
$V \ltimes (A \otimes \widehat{\L}_+)$. 

Let $\g$ be a Lie algebra, and let $W$ be a $\g$-module. If $A$ is a commutative algebra then $A \otimes W$ is naturally a module for the Lie algebra 
$A \otimes \g$. If $(A,V)$ is a Rinehart pair, then we can form a semi-direct product Lie algebra  $V \ltimes (A \otimes \g)$, where $V$ acts on $A$. 
Let $S$ be a strong $AV$-module.
Then $S \otimes W$ is a module for this semi-direct product, with the action
\begin{align*}
v \cdot (m \otimes w) = (v \cdot m) \otimes w, \\
(f \otimes x) \cdot (m \otimes w) = fm \otimes xw,
\end{align*}
where $v \in V, f \in A, x \in \g, m \in S, w \in W$.
Note that $(A, V \ltimes (A \otimes \g))$ is a Rinehart pair, with a trivial action of $A \otimes \g$ on $A$, and $S \otimes W$ is a strong module for this pair. Thus we can define $A$-linear cohomology of $V \ltimes (A \otimes \g)$ with values in $S \otimes W$.

\begin{theorem}
\label{semidirect_Kunneth}
Let $(A, V)$ be a Rinehart pair, let $S$ be a strong $AV$-module, and let $W$ be a module for the Lie algebra $\g$. Then
\[
H^*_A (V \ltimes (A \otimes \g),\, S \otimes W) \cong H^*_A (V, S) \otimes H^*(\g, W).
\]
\end{theorem}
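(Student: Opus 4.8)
The plan is to realize the $A$-linear Chevalley--Eilenberg complex of $\tilde V:=V\ltimes(A\otimes\g)$ as the total complex of a first-quadrant double complex, to show that this double complex is an \emph{untwisted} tensor product of the complex $C_A^\bullet(V,S)$ with the complex $C^\bullet(\g,W)$, and then to conclude via the classical K\"unneth theorem for a tensor product of complexes of $\k$-vector spaces. To set up the bigrading, note that as an $A$-module $\tilde V=V\oplus(A\otimes\g)$, with $A$ acting on the first tensor factor of $A\otimes\g$. Hence $\bigwedge^n_A\tilde V=\bigoplus_{p+q=n}\bigwedge^p_A V\otimes_A\bigwedge^q_A(A\otimes\g)$ and $\bigwedge^q_A(A\otimes\g)\cong A\otimes_\k\bigwedge^q\g$, which induces a decomposition $C^n_A(\tilde V,S\otimes W)=\bigoplus_{p+q=n}C^{p,q}$ with
\[
C^{p,q}=\Hom_A\!\big(\textstyle\bigwedge^p_A V\otimes_\k\bigwedge^q\g,\; S\otimes W\big)\cong C_A^p(V,S)\otimes_\k C^q(\g,W),
\]
the last identification using $A$-multilinearity to evaluate an $A$-linear cochain on arguments of the form $1\otimes x$ with $x\in\g$ (here finite-dimensionality of $W$, or a passage to finite cochains in the intended application, is used to commute the relevant $\Hom$ and $\otimes$).

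Next I would split the differential by bidegree. In $\tilde V$ a bracket lands in $V$ only for $[V,V]$, and lands in $A\otimes\g$ for both $[A\otimes\g,A\otimes\g]$ and the mixed bracket $[V,A\otimes\g]$; moreover $V$ acts on $S\otimes W$ through the $S$-factor only, while $(A\otimes\g)$ acts through both $A$ and $W$. Tracking how each term of the Chevalley--Eilenberg differential changes $(p,q)$ shows that $d=d'+d''$, where $d'\colon C^{p,q}\to C^{p+1,q}$ collects the $[V,V]$-brackets, the mixed brackets, and the $V$-action, and $d''\colon C^{p,q}\to C^{p,q+1}$ collects the $[A\otimes\g,A\otimes\g]$-brackets and the $(A\otimes\g)$-action. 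The identity $d^2=0$ then forces $d'^2=d''^2=0$ and $d'd''+d''d'=0$, so $C^{\bullet,\bullet}$ is a double complex.

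The crux is the explicit identification of $d'$ and $d''$ under the isomorphism $C^{p,q}\cong C_A^p(V,S)\otimes C^q(\g,W)$. The key observation is that on the constant-coefficient arguments $1\otimes x$ the mixed bracket vanishes, $[\eta,1\otimes x]=\eta(1)\otimes x=0$, since a derivation kills the unit. Because $A$-linear cochains are determined by their values on such arguments, the coupling term in $d'$ drops out and $d'$ reduces to the $A$-linear Chevalley--Eilenberg differential of $(V,S)$ acting on the first factor, i.e. $d'=d_{V,S}\otimes\mathrm{id}$. Dually, since $(1\otimes x)(m\otimes w)=m\otimes xw$ acts trivially on $S$, the differential $d''$ reduces to $\pm\,\mathrm{id}\otimes d_{\g,W}$, the Chevalley--Eilenberg differential of $(\g,W)$ on the second factor. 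Thus the total complex is precisely the tensor product of complexes $\big(C_A^\bullet(V,S),d_{V,S}\big)\otimes_\k\big(C^\bullet(\g,W),d_{\g,W}\big)$.

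Finally I would invoke the K\"unneth theorem for a tensor product of two complexes of $\k$-vector spaces (over a field all Tor-terms vanish), obtaining
\[
H^n_A(V\ltimes(A\otimes\g),\, S\otimes W)\cong\bigoplus_{p+q=n}H^p_A(V,S)\otimes_\k H^q(\g,W),
\]
which is the asserted isomorphism. I expect the main obstacle to be the bookkeeping in the crux step: verifying, with the correct Koszul signs, that $d'$ and $d''$ genuinely decouple as $d_{V,S}\otimes\mathrm{id}$ and $\mathrm{id}\otimes d_{\g,W}$ after the constant-coefficient reduction, and checking that each partial differential preserves $A$-linearity so that reduction to arguments $1\otimes x$ is legitimate. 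The finiteness needed to identify $C^{p,q}$ with the tensor product (so that K\"unneth applies directly) is a secondary point, resolved by the finite-dimensionality of $W$ or the finite-cochain hypothesis used in the application.
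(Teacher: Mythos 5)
You take a genuinely different route from the paper: you attempt to identify the whole complex $C_A^\bullet(V\ltimes(A\otimes\g),\,S\otimes W)$ with the tensor product of complexes $C_A^\bullet(V,S)\otimes_\k C^\bullet(\g,W)$ and then apply the K\"unneth theorem for complexes over a field, whereas the paper runs the Hochschild--Serre spectral sequence for the ideal $A\otimes\g$ and uses the $\ast$-map (which is exactly your embedding of the tensor-product complex into the full complex) only to exhibit cocycle representatives of $E_2$-classes and force degeneration at $E_2$. Your bidegree splitting $d=d'+d''$ and the decoupling computation are correct: each bidegree component of $d$ preserves $A$-linearity, an $A$-linear cochain is determined by its values on arguments $1\otimes x$, and $[\eta,1\otimes x]=\eta(1)\otimes x=0$ kills the coupling terms; this is the same calculation as the paper's lemma $d(\alpha\ast\beta)=d\alpha\ast\beta+(-1)^k\,\alpha\ast d\beta$.

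The genuine gap is the step you treat as secondary bookkeeping: the identification $C^{p,q}\cong C_A^p(V,S)\otimes_\k C^q(\g,W)$. Since $C^{p,q}\cong\Hom_\k\big(\bigwedge^q\g,\,\Hom_A(\bigwedge_A^pV,\,S\otimes W)\big)$, the natural map from $C_A^p(V,S)\otimes_\k C^q(\g,W)$ into $C^{p,q}$ is injective but \emph{not} surjective once $\bigwedge^q\g$ and $C_A^p(V,S)$ are both infinite dimensional: its image consists only of cochains of finite rank. Finite-dimensionality of $W$ does not repair this --- already for $W=\k$ you would need $\Hom_\k\big(\bigwedge^q\g,\,C_A^p(V,S)\big)=(\bigwedge^q\g)^*\otimes C_A^p(V,S)$, which fails in infinite dimensions. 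And this is exactly the situation of the paper's application, where $\g=\widehat\L_+$ is infinite dimensional and $C_A^p(V,S)$ is infinite dimensional over $\k$. Consequently your K\"unneth argument computes the cohomology of a proper subcomplex, and you would still owe the key point --- that the inclusion of this subcomplex into $C_A^\bullet(V\ltimes(A\otimes\g),\,S\otimes W)$ is a quasi-isomorphism --- which is precisely what the paper's spectral-sequence argument supplies. To be fair, the paper's own identification of the $E_2$-term, ``$C_A^*(A\otimes\g,\,S\otimes W)\cong S\otimes C^*(\g,W)$'', has the same character, so the theorem really does require some finiteness; in the setting where it is applied (finite cochains, which factor through the finite-dimensional quotients $\widehat\L_+/\prod_{k\ge d}\L_k$, together with finite-dimensional $W$ and $V$ finitely generated projective over $A$) both identifications hold, and there your argument does go through. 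But note the asymmetry: your proof needs the isomorphism at the level of cochain spaces, while the paper only needs the inner cohomology isomorphism $H_A^q(A\otimes\g,\,S\otimes W)\cong S\otimes H^q(\g,W)$ plus the existence of cocycle representatives.
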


The proof is based on the Hochschild-Serre spectral sequence \cite{HochschildSerre1953}. We consider a filtration on the space of cochains $C^k = C^k_A (V \ltimes (A \otimes \g),\, S \otimes W)$:
\[
C^k = F^0 C^k \supset F^1 C^k \supset \ldots \supset F^k C^k \supset F^{k+1} C^k = (0),  
\]
where 
\[
F^pC^{p+q} = \left\{ \vphi \in C^{p+q} \, \big| \, \vphi(x_1, \ldots, x_{p+q}) = 0 \ \text{for } x_1, \ldots, x_{q+1} \in A \otimes \g \right\}.
\]
It is easy to check that $d F^pC^{p+q} \subset F^pC^{p+q+1}$.
Let $\{ E_r^{p,q}, d_r^{p,q} \}$ be the spectral sequence associated with this
filtration (see \cite{McCleary2001} for details on spectral sequences).

The $E_0$ term of this spectral sequence is defined as
\[
E_0^{p,q} = F^pC^{p+q} / F^{p+1}C^{p+q},
\]
and the differential $d_0^{p,q} : \, E_0^{p,q} \rightarrow E_0^{p,q+1}$ is induced by the differential of the cochain complex.

The $E_2$ term of this spectral sequence was computed in \cite{HochschildSerre1953} (see also Theorem 1.5.1 in \cite{Fuks1986}):
\[
E_2^{p.q} = H_A^p (V, H_A^q (A \otimes \g,\, S \otimes W)).
\]
The argument used in the proof of Proposition \ref{Alinearity} shows that the complexes $C^*_A(A \otimes \g,\, S \otimes W)$ and $S \otimes C^*(\g, W)$
are isomorphic. Thus,
\[
 H_A^q (A \otimes \g,\, S \otimes W) \cong S \otimes H^q (\g, W).
\]
Since the action of $V$ on $H^q (\g, W)$ is trivial, we get that
\[
E_2^{p.q} =  H_A^p (V, S \otimes H^q (\g, W)) \cong H_A^p (V, S) \otimes H^q (\g, W).
\]
To complete the proof of the theorem, we need to show that the differentials $d_r^{p,q}$ vanish for $r\geq 2$, and the spectral sequence stabilizes at $E_2$ term.

 We construct a map between complexes
 \[
 \ast: \, C^*_A (V, S) \otimes C^* (\g, W) \rightarrow C^*_A (V \ltimes (A \otimes \g),\, S \otimes W).
 \]
 Define $\ast:\, C_A^k(V, S) \otimes C^m(\g, W) \rightarrow C_A^{k+m}(V \ltimes (A \otimes \g), \, S \otimes W)$ by $(\alpha, \beta) \mapsto \alpha \ast \beta$, where 
\begin{equation}\label{equation:starmap}
\alpha \ast \beta (v_1, \dots, v_k, f_1 \otimes x_1, \dots, f_m \otimes x_m) = f_1 \cdots f_m \alpha(v_1, \dots, v_k) \otimes \beta(x_1, \dots, x_m)
\end{equation}
for $\alpha \in C_A^k(V, S), \beta \in C^m(\g, W)$. 
Here we assume that every argument of $\alpha \ast \beta$ is either in $V$ or in $A \otimes \g$. If the number of arguments of $\alpha \ast \beta$ from $V$ is not equal to $k$, we set the value of $\alpha \ast \beta$ to be zero.

\begin{lemma}
For any $\alpha \in C_A^k(V,S)$ and $\beta \in C^m(\g, W)$ 
\[
d(\alpha \ast \beta) = d \alpha \ast \beta + (-1)^k\alpha \ast d \beta.
\]
\end{lemma}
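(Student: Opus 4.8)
The plan is to verify the Leibniz identity argument-by-argument, exploiting the fact that $\alpha \ast \beta$ is supported on a single ``shape'' of inputs. By multilinearity of both sides it suffices to evaluate on tuples each of whose entries lies either in $V$ or in $A \otimes \g$. By construction $\alpha \ast \beta$ vanishes unless exactly $k$ of its arguments lie in $V$ and exactly $m$ lie in $A \otimes \g$. Consequently each of the three expressions $d(\alpha \ast \beta)$, $d\alpha \ast \beta$ and $\alpha \ast d\beta$ vanishes on any $(k{+}m{+}1)$-tuple whose shape is neither $(k{+}1, m)$ (that is, $k{+}1$ entries from $V$ and $m$ from $A \otimes \g$) nor $(k, m{+}1)$. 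It therefore suffices to check the identity separately on these two shapes, and on each shape exactly one of the two terms on the right-hand side survives.

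First I would treat the shape $(k{+}1, m)$, with arguments $v_1, \dots, v_{k+1} \in V$ and $f_1 \otimes x_1, \dots, f_m \otimes x_m \in A \otimes \g$, on which $\alpha \ast d\beta$ vanishes, so the target is $d(\alpha\ast\beta) = d\alpha \ast \beta$. Expanding the Chevalley--Eilenberg differential and using the bracket relations of the semi-direct product, namely $[V,V]\subseteq V$, $[v, f\otimes x] = v(f)\otimes x$, and $[A\otimes\g, A\otimes\g]\subseteq A\otimes\g$, the support condition kills every term except two families: the brackets $[v_i,v_j]$ together with the $V$-action of the $v_i$ on the $S$-factor; and the mixed brackets $[v_i, f_j\otimes x_j] = v_i(f_j)\otimes x_j$. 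The terms of the first family reassemble precisely into $d\alpha \ast \beta$, since the $V$-arguments occupy the leading positions and the coefficient $f_1\cdots f_m \otimes \beta(x_1,\dots,x_m)$ is carried along untouched. The crux is that the mixed brackets cancel: when $v_i$ acts on the value $f_1\cdots f_m\,\alpha(\dots)\otimes\beta(\dots)$, the Leibniz rule in the $AV$-module $S$ produces the extra term $v_i(f_1\cdots f_m)\,\alpha(\dots)\otimes\beta(\dots) = \sum_j f_1\cdots v_i(f_j)\cdots f_m\,\alpha(\dots)\otimes\beta(\dots)$, and after accounting for the sign of reordering each mixed-bracket argument into sorted position, these contributions cancel the Leibniz terms exactly.

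Next I would treat the shape $(k, m{+}1)$, with arguments $v_1,\dots,v_k \in V$ and $g_1\otimes y_1,\dots,g_{m+1}\otimes y_{m+1}\in A\otimes\g$, on which $d\alpha\ast\beta$ vanishes, so the target is $d(\alpha\ast\beta) = (-1)^k\, \alpha\ast d\beta$. Now the support condition leaves only the brackets $[g_l\otimes y_l, g_{l'}\otimes y_{l'}] = g_l g_{l'}\otimes[y_l,y_{l'}]$ and the actions of the $g_l\otimes y_l$ (acting as $g_l$ on the $S$-factor and as $y_l$ on the $W$-factor); the purely-$V$ brackets, the mixed brackets, and the $V$-actions all produce the wrong shape and vanish. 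These surviving terms reassemble into $\alpha\ast d\beta$, with $d\beta$ appearing in the $W$-factor and the product $g_1\cdots g_{m+1}$ collected in the $S$-factor. The global sign $(-1)^k$ appears because the $(A\otimes\g)$-valued bracket $[g_l\otimes y_l, g_{l'}\otimes y_{l'}]$, and for the action terms the argument $g_l\otimes y_l$ itself, must be moved past the $k$ leading $V$-arguments before $\alpha\ast\beta$ can be evaluated in its sorted form.

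I expect the only genuine work to be the sign bookkeeping, and the single real subtlety to be the cancellation of the mixed brackets in the first case. That cancellation is exactly the mechanism already used in the proof that $C_A^*(V,M)$ is a subcomplex: the derivative $v_i(f_j)$ produced by a mixed bracket is matched, with opposite sign, by the Leibniz term arising when $v_i$ differentiates the $A$-coefficients of $\alpha\ast\beta$. Isolating this one identity, and then carefully matching signs on the two shapes, will complete the proof.
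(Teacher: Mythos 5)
Your proposal is correct and takes essentially the same approach as the paper's proof: both split the verification according to whether the arguments have shape $(k{+}1,m)$ or $(k,m{+}1)$, observe that exactly one term on the right-hand side survives on each shape, reassemble the surviving terms into $d\alpha \ast \beta$ and $(-1)^k \alpha \ast d\beta$ respectively, and hinge on the cancellation of the mixed brackets $[v_s, f_t \otimes x_t] = v_s(f_t) \otimes x_t$ against the Leibniz terms arising when $v_s$ differentiates the coefficient $f_1 \cdots f_m$ in the strong $AV$-module $S$. The only difference is presentational: the paper writes out the sign bookkeeping explicitly, which is the part you deferred.
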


\begin{proof}
By definition, $\alpha \ast \beta \in C^{k+m}(V \ltimes (A \otimes \g),\, S \otimes W)$. By the choice of $\alpha$ and $\beta$, $\alpha \ast \beta$ will only be nonzero on $\bigwedge^k V \wedge \bigwedge^m (A \otimes \g)$ and so $d(\alpha \ast \beta)$ will only be nonzero on $\bigwedge^{k+1} V \wedge \bigwedge^m (A \otimes \g) + \bigwedge^k V \wedge \bigwedge^{m+1} (A \otimes \g)$. We will calculate $d(\alpha \ast \beta)$ on each summand. 

For $v_0, v_1, \dots, v_k \in V$ and $f_1 x_1, \dots, f_m  x_m \in A\otimes \g$, we will use the short hand of $\beta(x)$ for $\beta(x_1, \dots, x_m)$.
\begin{align*}
& \quad d (\alpha \ast \beta) (v_0, v_1, \dots, v_k, f_1  x_1, \dots, f_m  x_m) \\
&= \sum_{0 \leq s< t\leq k} (-1)^{s+t-1} f_1 \cdots f_m \alpha ([v_s, v_t] , v_0, \dots, \hat{v}_s, \dots, \hat{v}_t, \dots, v_k) \otimes \beta(x) \\
&\quad+ \sum_{0 \leq s \leq k} (-1)^{s+1} v_s \cdot f_1 \cdots f_m \alpha(v_0, \dots, \hat{v}_s, \dots, v_k)  \otimes \beta(x)\\
&\quad+\sum_{\substack{0 \leq s \leq k, \\ 1 \leq t\leq m}} (-1)^{s+t-1} (-1)^{k+t-1} v_s(f_t) f_1 \cdots \hat{f_t} \cdots f_m \alpha(v_0, \dots, \hat{v}_s, \dots, v_k) \otimes \beta (x) \\
&= \sum_{0 \leq s< t\leq k} (-1)^{s+t-1} f_1 \cdots f_m \alpha ([v_s, v_t] , v_0, \dots, \hat{v}_s, \dots, \hat{v}_t, \dots, v_k) \otimes \beta(x) \\
&\quad+ \sum_{0 \leq s \leq k} (-1)^{s+1}  f_1 \cdots f_m \cdot v_s \cdot \alpha(v_1, \dots, \hat{v}_s, \dots, v_k)  \otimes \beta(x) \\
& \quad+ \sum_{\substack{0 \leq s \leq k, \\ 1 \leq t\leq m}} (-1)^{s+1} v_s(f_t) \cdot f_1 \cdots \hat{f_t} \cdots f_m \alpha(v_1, \dots, \hat{v}_s, \dots, v_k)  \beta(x) \\
& \quad+\sum_{\substack{0 \leq s \leq k, \\ 1 \leq t\leq m}}(-1)^{s} v_s(f_t) \cdot f_1 \cdots \hat{f_t} \cdots f_m \alpha(v_0, \dots, \hat{v}_s, \dots, v_k) \otimes \beta (x) \\
& =   ((d\alpha)\ast \beta) (v_0, v_1, \dots, v_k, f_1  x_1, \dots, f_m  x_m) 
\end{align*}

Now for $v_1, \dots, v_k \in V$ and $f_0  x_0, \dots f_m  x_m \in A \otimes \g$, let $\alpha(v)$ be shorthand for $\alpha(v_1, \dots, v_k)$. 
\begin{align*}
& \quad (-1)^k d(\alpha \ast \beta) (f_0  x_0, v_1, \dots, v_k, f_1  x_1, \dots, f_m  x_m) \\
&= d(\alpha \ast \beta) (v_1, \dots, v_k, f_0  x_0, f_1  x_1, \dots, f_m  x_m) \\
&= \sum_{0 \leq s< t\leq m} (-1)^{s+t-1}(-1)^{k} \alpha \ast \beta (v, [f_s  x_s, f_t  x_t],f_0  x_0, \dots, \widehat{f_s  x_s}, \dots, \widehat{f_t  x_t}, \dots, f_m  x_m ) \\
& \quad + \sum_{0 \leq s \leq m} (-1)^{s+k+1} (f_s  x_s) f_0 \cdots \hat{f}_s \cdots f_m \alpha(v) \otimes \beta( x_0,  \dots, \widehat{ x_s}, \dots,  x_m)\\ 
&= \sum_{0 \leq s< t\leq m} (-1)^{s+t+k-1} f_0 \cdots \hat{f_s} \cdots \hat{f_t} \cdots f_m \alpha(v) \otimes \beta ([ x_s, x_t],  x_0, \dots, \widehat{x_s}, \dots, \widehat{x_t}, \dots, x_m ) \\
&\quad + \sum_{0 \leq s \leq m} (-1)^{s+k+1} f_0 \cdots f_m \alpha(v) \otimes x_s\beta( x_0,  \dots, \widehat{ x_s}, \dots,  x_m)\\ 
& = (-1)^k (\alpha \ast d \beta) (v_1, \dots, v_k, f_0  x_0, f_1 x_1, \dots, f_m  x_m)
\end{align*}
Thus, $d(\alpha \ast \beta) = d\alpha \ast \beta + (-1)^k \alpha \ast d \beta$. 
\end{proof}

\begin{proposition}
\label{cohomology_map}
The map $\ast$ induces a map of cohomology spaces:
\[
\mathop\oplus\limits_{i+j=n} H_A^i(V, S) \otimes H^{j}(\g, W) \rightarrow H_A^{n}(V \ltimes (A \otimes \g),\, S \otimes W).
\]
\end{proposition}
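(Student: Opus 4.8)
The plan is to deduce everything formally from the Leibniz identity of the preceding Lemma. The key observation is that this identity is precisely the statement that $\ast$ is a morphism of cochain complexes: if we equip the tensor product $C^*_A(V, S) \otimes C^*(\g, W)$ with the standard total differential $D(\alpha \otimes \beta) = d\alpha \otimes \beta + (-1)^k \alpha \otimes d\beta$ for $\alpha \in C^k_A(V, S)$, then the relation $d(\alpha \ast \beta) = d\alpha \ast \beta + (-1)^k \alpha \ast d\beta$ says exactly that $d \circ \ast = \ast \circ D$, so $\ast$ is a chain map into $C^*_A(V \ltimes (A \otimes \g), S \otimes W)$.

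Rather than invoke the algebraic Künneth theorem, I would argue directly on representatives. Suppose $\alpha \in C^i_A(V, S)$ and $\beta \in C^j(\g, W)$ are cocycles, so $d\alpha = 0$ and $d\beta = 0$. Then the Lemma gives $d(\alpha \ast \beta) = 0$, so $\alpha \ast \beta$ is a cocycle in $C^{i+j}_A(V \ltimes (A \otimes \g), S \otimes W)$. Next I would check that its cohomology class depends only on the classes of $\alpha$ and $\beta$: replacing $\alpha$ by $\alpha + d\gamma$ changes $\alpha \ast \beta$ by $d\gamma \ast \beta = d(\gamma \ast \beta)$, using $d\beta = 0$ together with the sign-corrected Leibniz rule, hence by a coboundary; the argument for $\beta$ is symmetric. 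This shows that $([\alpha], [\beta]) \mapsto [\alpha \ast \beta]$ is a well-defined map $H^i_A(V,S) \times H^j(\g, W) \to H^{i+j}_A(V \ltimes (A \otimes \g), S \otimes W)$.

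Finally, since $\ast$ is $\k$-bilinear on cochains, the induced pairing is $\k$-bilinear on cohomology, so it factors through the tensor product $H^i_A(V,S) \otimes H^j(\g, W)$; summing over all $i + j = n$ produces the asserted map. There is no real obstacle here, as the entire content already resides in the Leibniz identity proved in the Lemma; the only point requiring attention is to confirm that the sign $(-1)^k$ appearing there is exactly the one making the coboundary computations close up, which it is. I would stress that this Proposition asserts only the \emph{existence} of the induced map; the statement that it is an isomorphism — equivalently, that the Hochschild–Serre spectral sequence collapses at the $E_2$ term and that the differentials $d_r^{p,q}$ vanish for $r \geq 2$ — is the substantive claim to be settled afterward, for which this chain map serves as the explicit splitting.
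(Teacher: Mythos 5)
Your proposal is correct and follows essentially the same route as the paper's proof: both use the Leibniz identity $d(\alpha \ast \beta) = d\alpha \ast \beta + (-1)^k \alpha \ast d\beta$ to show that cocycles pair to cocycles and that a coboundary in either factor (paired with a cocycle in the other) lands in the coboundaries, so the pairing descends to cohomology. Your added framing of $\ast$ as a chain map from the total complex, and your closing remark distinguishing existence of the map from its bijectivity, are consistent with how the paper then uses this Proposition in the spectral-sequence argument for Theorem \ref{semidirect_Kunneth}.
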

\begin{proof}
We use the standard notation $Z^i$ and $B^i$ to denote the kernels and images, respectively, of the differential maps $d^i$. 

The restriction of the map $\ast$ to cocycles will have codomain $Z_A(V\ltimes (A \otimes \g),\, S \otimes W)$ as $d(\alpha \ast \beta) =0$ if both $d\alpha =0$ and $d\beta =0$. To induce a map on cohomologies, we need the images of $B_A^i(V, S) \otimes Z^{j}(\g,W)$ and $Z_A^i(V,S) \otimes B^{j}(\g,W)$ to be contained in $B_A^n(V \ltimes (A \otimes \g),\, S \otimes W)$. Suppose $\alpha \in B_A^i(V,S)$ then $\alpha = d\delta$ for some $\delta \in C_A^{i-1}(V, S)$. Then $d(\delta \ast \beta) = d\delta \ast \beta= \alpha \ast \beta$ for every $\beta \in Z^{j}(\g, W)$. Similarly, $Z_A^i(V, S) \ast B^{j}(\g,W) \in B^n_A(V \ltimes(A \otimes \g),\, S \otimes W)$.  This completes the proof of the proposition.
\end{proof}

Returning to the proof of Theorem \ref{semidirect_Kunneth}, it follows from 
Proposition \ref{cohomology_map} that the elements of $E_2^{p,q}$ may be represented by cocycles. Since the differential $d_2^{p,q}$ is induced by the differential of the cochain complex $C^*_A (V \ltimes (A \otimes \g),\, S \otimes W)$, we get that
$d_2^{p,q} = 0$. By induction, we get that $E_r^{p,q} = E_2^{p,q}$ and 
$d_r^{p,q} = 0$ for all $r \geq 2$. Thus the spectral sequence stabilizes at the $E_2$ term and we get
\[
H^n (V \ltimes (A \otimes \g),\, S \otimes W) \cong \mathop\oplus\limits_{p+q = n} E^{p,q}_\infty \cong \mathop\oplus\limits_{p+q = n} H_A^p(V, S) \otimes H^q(\g, W).
\]
This completes the proof of Theorem \ref{semidirect_Kunneth}.

\begin{remark}
We were not able to find Theorem \ref{direct_sum_Kunneth} in the literature.
Instead, we get it as a corollary to Theorem \ref{semidirect_Kunneth} by setting $A = \k$.
\end{remark}

When $V$ is the Lie algebra of vector fields on an \'etale chart and $A$ is the coordinate ring, we can apply this K\"unneth formula to compute Gelfand-Fuks cohomology of vector fields with values in tensor modules. 

\section{Gelfand-Fuks cohomology for varieties with uniformizing parameters}\label{section:etalecharts}

In this section we will summarize our results in the case when variety $X$ has uniformizing parameters, i.e., it is covered by a single \'etale chart.
Such varieties are smooth.
Following the notations from Section \ref{section:background}, let $\{x_1, \dots, x_n\}$ be the uniformizing parameters on $X$, 
$A$ be the algebra of functions on $X$ and $V$ be the Lie algebra of vector fields. 

In this case $V$ is a free $A$-module \cite{BilligFutornyNilsson}:
\[
V = \mathop\oplus\limits_{i=1}^n A \frac{\partial}{\partial x_i}.
\]
The associative algebra $D$ of differential operators on $X$ is also free over $A$:
\[
D = \mathop\oplus\limits_{k \in \Z_{+}^n} A \,\partial^k,
\]
where $\partial^k = \left(\frac{\partial}{\partial x_1}\right)^{k_1} \cdot \ldots \cdot \left(\frac{\partial}{\partial x_n}\right)^{k_n}$.

Let $\L_+$ be the non-negatively graded part of the Lie algebra $\Der (\k[X_1, \dots, X_n])$, and let $\widehat{\L}_+$ be its completion. From Theorem \ref{theorem:BItheorem22}, the Lie algebra of jets of vector fields decomposes as follows:
\begin{equation}
\label{iso}
A \widehat{\sp} V \cong V \ltimes (A \wo \L_+).
\end{equation}

Let $W$ be a finite-dimensional $\L_+$-module. By Lemma 2 in \cite{Billig2007}, graded components $\L_d$ act as zero on $W$ for $d$ large enough. 
Simple finite-dimensional $\L_+$-modules are precisely simple finite-dimensional modules for $\gl_n \cong \L_0$, with $\L_d$ acting trivially for $d \geq 1$.

Given a $D$-module $S$ and a finite-dimensional $\L_+$-module $W$,
the tensor product $M = S \otimes W$ is a module for the semi-direct product
$V \ltimes (A \wo \L_+$). Taking into account isomorphism (\ref{iso}), we conclude that $M$ is a module for $A \widehat{\sp} V$.
Note that in the isomorphism (\ref{iso}), elements of the form 
$\sum_{j=d}^\infty x_j$ with $x_j \in \Delta^{j+1} \otimes_A V$ are precisely those that are mapped into the space $A \wo  \L_{\geq d}$.
Thus for $d$ large enough, such elements annihilate $M$ and we conclude that $M$ is a differentiable $AV$-module.

Let us describe the structure of an $AV$-module on $M = S \otimes W$. Here $A$ acts on $S$ and the action of the vector fields is given as follows (cf. \cite{BilligFutornyNilsson}):
\[
f \frac{\partial}{\partial x_i} (s \otimes w) = \left( f \frac{\partial}{\partial x_i} s \right) \otimes w + \sum_{k \in \Z_{+}^n \backslash \{ 0 \}} \frac{1}{k!} \,\frac{\partial^k f}{\partial x^k}\, s \otimes \left( X^k \frac{\partial}{\partial X_i}\right) w,
\]
for $f \in A, s\in S, w \in W$. Note that the sum in the right-hand side is finite since elements of $\L_+$ of high enough degree annihilate $W$.

Let us mention three important classes of such modules $S \otimes W$.

1. Let $S = A$. In this case we obtain {\it tensor} modules $A \otimes W$. These generalize bundles of tensor fields on a variety and their bundles of finite jets.

2. Let $S$ be a free $A$-module of a finite-rank, $S = A \otimes U$, $\dim U < \infty$. Define gauge fields $B_i \in \End_A (A \otimes U)$, $i = 1, \dots, n$, satisfying 
\[
\left[ \frac{\partial}{\partial x_i} \otimes 1 + B_i, \, \frac{\partial}{\partial x_j} \otimes 1 + B_j \right] = 0.
\]
Then $S$ is a $D$-module with the action of $\frac{\partial}{\partial x_i}$
given by $\frac{\partial}{\partial x_i} \otimes 1 + B_i$.
The corresponding $AV$-module $S \otimes W$ is called a {\it gauge} module.

3. Let $S$ be a $D$-module of delta-functions, supported at a fixed point $P \in X$. Module $S$ is spanned by all partial derivatives of the delta-function $\delta_P$, so 
\[
S \cong \k \left[ \frac{\partial}{\partial x_1}, \ldots, \frac{\partial}{\partial x_n} \right] \delta_P .
\]
The corresponding $AV$-module $M = S \otimes W$ is called a {\it Rudakov} module (see \cite{BilligFutornyNilsson, BilligBouaziz} for details).

We are interested in the Gelfand-Fuks cohomology of $V$ with coefficients in an $AV$-module $M = S \otimes W$. By Proposition \ref{isoGF},
\[
H^*_\text{GF} (V, M) \cong H^*_{A, \text{fin}} (A \widehat{\sp} V, M)
\cong \varinjlim\limits_{k} H_{A}^* (A \sp V / (\Delta^k \otimes_A V), M).
\]
By Theorem \ref{theorem:BItheorem22}, Lie algebra $A \sp V / (\Delta^{k+1} \otimes_A V)$ decomposes into the semi-direct product $V \ltimes (A \otimes \L_+/\L_{\geq k})$, and $M = S \otimes W$. 
Our K\"unneth formula from Section \ref{section:Kunneth} states that 
\[
H^*_A (V \ltimes (A \otimes \L_+/\L_{\geq k}),\, S \otimes W) \cong H^*_A (V, S) \otimes H^*(\L_+/\L_{\geq k}, W).
\]
The $\Z$-grading on the Lie algebra $\L_+$ is given by the adjoint action of the element $X_1 \frac{\partial}{\partial X_1} + \ldots + X_n \frac{\partial}{\partial X_n}$. Module $W$ has a compatible grading by the generalized eigenvalues of this operator. By Theorem 1.5.2 in \cite{Fuks1986}, all cocycles in $H^*(\L_+, W)$ have degree zero. Since the grading on $W$ is finite, this implies that all cocycles in $H^*(\L_+, W)$ are finite and 
\[
H^*(\L_+, W) \cong H^*_\text{fin}(\L_+, W) \cong H^*(\L_+/\L_{\geq k}, W)
\]
for large enough $k$.
Our main result is the following theorem.
\begin{theorem}
\label{theorem:main}
(a) Let $X$ be an algebraic variety with uniformizing parameters. Let $S$ be a $D$-module, and let $W$ be a finite-dimensional module for the Lie algebra $\L_+$. Then the algebraic Gelfand-Fuks cohomology of the Lie algebra $V$ of vector fields on $X$ with values in $AV$-module $M = S \otimes W$ is given by
\[
H^*_\text{GF} (V, M) \cong H^*_A (V, S) \otimes H^*(\L_+, W).
\]
(b) In the case when $M$ is a tensor module, $S = A$, we have
\[
H^*_\text{GF} (V, A \otimes W) \cong H^*_\text{dR}(X) \otimes H^*(\L_+, W).
\]
\end{theorem}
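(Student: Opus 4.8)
The plan is to obtain part (a) by chaining together the isomorphisms established in the preceding sections, and then to read off part (b) by specializing $S = A$ and invoking Theorem \ref{theorem:derhamcohomology}. The substantive work has already been done; what remains is to confirm that $M = S \otimes W$ satisfies the hypotheses required at each step and that the finiteness conditions are compatible across the chain. First I would verify that $M = S \otimes W$ is a differentiable $AV$-module: since $W$ is a finite-dimensional $\L_+$-module, the graded pieces $\L_d$ annihilate $W$ for $d$ large, and under the isomorphism (\ref{iso}) the elements of $\Delta^d \otimes_A V$ are carried into $A \otimes \prod_{j \geq d} \L_j$, so for $d$ large enough they kill $M$, which is precisely $d$-differentiability. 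With this established, Proposition \ref{isoGF} gives
\[
H^*_{\GF}(V, M) \cong H^*_{A, \text{fin}}(A \widehat{\sp} V, M).
\]

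Next I would substitute the decomposition $A \widehat{\sp} V \cong V \ltimes (A \otimes \widehat{\L}_+)$ from Theorem \ref{theorem:BItheorem22}, rewriting the right-hand side as the finite $A$-linear cohomology of the semi-direct product with coefficients in $S \otimes W$. Applying the finite-cochain analogue of the K\"unneth formula of Theorem \ref{semidirect_Kunneth} then yields
\[
H^*_{A, \text{fin}}(V \ltimes (A \otimes \widehat{\L}_+), S \otimes W) \cong H^*_A(V, S) \otimes H^*_\text{fin}(\widehat{\L}_+, W).
\]
Finally, using that the $\Z$-grading on $\L_+$ induced by the Euler operator $\sum_i X_i \tfrac{\partial}{\partial X_i}$ concentrates all cocycles of $H^*(\L_+, W)$ in degree zero, together with the finiteness of the grading on $W$, I would identify $H^*_\text{fin}(\widehat{\L}_+, W) \cong H^*(\L_+, W)$, which completes part (a).

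For part (b) I would set $S = A$. The algebra $A$ is a $D$-module, being $1$-differentiable with vector fields acting by derivations, so the hypotheses of part (a) apply, and Theorem \ref{theorem:derhamcohomology} gives $H^*_A(V, A) \cong H^*_{\DR}(X)$. Substituting this into the formula of part (a) produces the stated decomposition
\[
H^*_{\GF}(V, A \otimes W) \cong H^*_{\DR}(X) \otimes H^*(\L_+, W).
\]

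The main obstacle lies not in the final assembly but in confirming that the finiteness condition propagates correctly through the K\"unneth step: one must check that the explicit star map of (\ref{equation:starmap}) and the Hochschild-Serre spectral sequence argument underlying Theorem \ref{semidirect_Kunneth} still function when restricted to finite cochains, so that the collapse at the $E_2$ page is preserved. Equally delicate is the passage from $\widehat{\L}_+$ to $\L_+$, which rests on the degree-zero concentration of the cohomology of $\L_+$ (Theorem 1.5.2 in \cite{Fuks1986}); this is what guarantees that every cocycle is automatically finite and that the completion introduces no new classes.
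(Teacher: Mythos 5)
Your proposal is correct and follows essentially the same route as the paper: establish that $M = S \otimes W$ is differentiable via the image of $\Delta^d \otimes_A V$ in $A \otimes \prod_{j \geq d} \L_j$, apply Proposition \ref{isoGF}, substitute the decomposition of Theorem \ref{theorem:BItheorem22}, invoke the finite-cochain version of the K\"unneth formula (Theorem \ref{semidirect_Kunneth}), pass from $H^*_\text{fin}(\widehat{\L}_+, W)$ to $H^*(\L_+, W)$ via the degree-zero concentration result of Fuks, and specialize $S = A$ with Theorem \ref{theorem:derhamcohomology} for part (b). The two delicate points you flag — finiteness propagating through the spectral-sequence argument and the completion introducing no new classes — are exactly the points the paper also relies on (and, like you, addresses only by asserting the finite-cochain analogues hold).
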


\begin{remark}
The cohomology $H^*(\L_+, W)$ is known explicitly only in some special cases, see \cite{Fuks1986} for details.    
\end{remark}

\section{Applications}

We now consider a few special cases and explicitly compute these cohomologies using the de Rham cohomology and cohomology of $\L_+$. 

\subsection{Polynomial vector fields on affine space}\label{section:polynomial}

In the case of an affine space $X \cong \affine^n$, $\V(\affine^n)$ are polynomial vector fields, usually denoted by $W_n$. Gelfand's school studied the continuous cohomology of formal vector fields, $\widehat{W}_n$, under the projective limit topology, i.e. they consider continuous functionals from $\bigwedge^k \widehat{W}_n \rightarrow A$. It is naturally graded by $\Z$ so that $\widehat{\L}_+$, the Lie algebra of all the elements with strictly positive grading, is defined. 

\begin{theorem}[{\cite[Theorem 1.5.4]{Fuks1986}}]
\label{theorem:theorem1.5.4}
For $W$ a tensor $\gl_n$-module and $\Coind^{\widehat{W}_n}_{\widehat{\L}_+} W$ the corresponding space of formal tensor fields, there are natural isomorphisms
\[
H^*(\widehat{W}_n, \Coind_{\widehat{\L}_+} W) \cong H^* (\widehat{\L}_+, W).
\]
\end{theorem}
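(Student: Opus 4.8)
The plan is to recognize the stated isomorphism as an instance of Shapiro's lemma for the pair $\widehat{\L}_+ \subset \widehat{W}_n$, carried out in the continuous (formal) category. The first step is to make the coefficient module explicit. With respect to the Euler grading, $\widehat{W}_n$ splits as a vector space into $\L_{-1} \oplus \widehat{\L}_+$, where $\L_{-1} = \operatorname{span}\{\partial/\partial X_1, \dots, \partial/\partial X_n\}$ is an abelian subalgebra complementary to $\widehat{\L}_+$. By the Poincar\'e--Birkhoff--Witt theorem, $U(\widehat{W}_n)$ is free as a right $U(\widehat{\L}_+)$-module on $\Sym(\L_{-1})$, so the (continuous) coinduced module is
\[
\Coind_{\widehat{\L}_+}^{\widehat{W}_n} W = \Hom^{\cont}_{U(\widehat{\L}_+)}\big( U(\widehat{W}_n), W \big) \cong \Hom^{\cont}_\k\big( \Sym(\L_{-1}), W \big) \cong W \otimes \k\llbracket X_1, \dots, X_n \rrbracket,
\]
which is exactly the space of formal tensor fields with its Lie-derivative action (the $\gl_n \cong \L_0$-structure on $W$ entering through the linear part). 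This both confirms that the two sides are well posed and pins down the coefficients.

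The second step is the homological core. Restriction $\operatorname{Res}\colon \widehat{W}_n\text{-mod} \to \widehat{\L}_+\text{-mod}$ is exact, and $\Coind_{\widehat{\L}_+}^{\widehat{W}_n}$ is its right adjoint; hence $\Coind$ preserves injectives, and by freeness of $U(\widehat{W}_n)$ over $U(\widehat{\L}_+)$ it is also exact. Thus, given an injective resolution $W \hookrightarrow I^\bullet$ in the category of continuous $\widehat{\L}_+$-modules, its coinduction $\Coind W \hookrightarrow \Coind I^\bullet$ is an injective resolution over $\widehat{W}_n$. Applying $\Hom_{\widehat{W}_n}(\k, -)$ and using the adjunction isomorphism term by term,
\[
\Hom_{\widehat{W}_n}\big(\k, \Coind_{\widehat{\L}_+}^{\widehat{W}_n} I^\bullet\big) \cong \Hom_{\widehat{\L}_+}\big(\k, I^\bullet\big),
\]
shows that both cohomologies are computed by the same complex, yielding $H^*(\widehat{W}_n, \Coind_{\widehat{\L}_+}^{\widehat{W}_n} W) \cong H^*(\widehat{\L}_+, W)$, naturally in $W$.

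The main obstacle is purely the topology: every functor above must be interpreted in the continuous/completed setting, so I would need to check that the PBW splitting is a topological isomorphism compatible with the filtrations defining the completions, that continuous coinduction is genuinely exact and right adjoint to restriction for continuous modules, and that the continuous module category carries enough injectives to run the resolution argument. Should the abstract injective machinery prove awkward in the topological category, I would instead argue by an explicit contracting homotopy along the finite-dimensional abelian factor $\L_{-1}$: since $\Coind W$ is free over $\Sym(\L_{-1})$, the $\L_{-1}$-directions of the Chevalley--Eilenberg complex form an acyclic Koszul-type factor, collapsing the computation onto the $\widehat{\L}_+$-cochains valued in $W$. I expect this hands-on route, rather than the abstract adjunction, to be where the real work lies.
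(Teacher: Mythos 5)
The paper does not prove this statement: it is imported verbatim from Fuks's book (as the bracketed citation in the theorem header indicates), and is used only as a consistency check against the paper's own algebraic corollary $H^*(W_n, A \otimes W) \cong H^*(\L_+, W)$ in Section 7.1, which the paper obtains by an entirely different, purely algebraic route (Corollary \ref{corollary:tildebijective}, the splitting $A \sp V \cong V \ltimes (A \otimes \L_+)$ from Theorem \ref{theorem:BItheorem22}, and the K\"unneth formula of Theorem \ref{semidirect_Kunneth}). So your attempt has to be judged against the classical argument rather than against anything in the paper.

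Judged that way, your plan is essentially the right reconstruction. The identification of formal tensor fields with the coinduced module $\Hom_{U(\widehat{\L}_+)}(U(\widehat{W}_n), W) \cong W \otimes \k\llbracket X_1, \dots, X_n \rrbracket$ via PBW along the abelian complement $\L_{-1}$ is correct, and Shapiro's lemma is indeed the mechanism behind Fuks's theorem. The weak point is precisely the one you name yourself, and it is more than a routine verification: continuous cohomology of $\widehat{W}_n$ is \emph{defined} by continuous cochains and is not a derived functor in the topological module category (which lacks enough injectives), so the adjunction-plus-injective-resolution argument cannot be run as stated; only your fallback --- the explicit Koszul-type contracting homotopy along the finite-dimensional factor $\L_{-1}$, equivalently a Hochschild--Serre-type filtration relative to $\widehat{\L}_+$ --- can actually carry the proof in this setting, and that part is sketched rather than executed. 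This cochain-level argument is exactly where the content of Fuks's proof lies. It is also worth noting what the paper's different route buys: by trading formal vector fields and continuous cochains for polynomial vector fields and differentiability (Gelfand-Fuks) conditions on cochains, the jets-plus-K\"unneth machinery sidesteps the topological difficulties altogether, which is the point of the paper's algebraic framework.
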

In our setting, we deal with polynomial vector fields $W_n$ rather than the completion $\widehat{W}_n$. In fact, in this case of affine space, we are able to work with the general cohomology theory, which turns out to coincide with the algebraic Gelfand-Fuks cohomology.

By Corollary \ref{corollary:tildebijective}, we have an isomorphism
\[
H^*(V, M) \cong H_A^*(A \sp V, M).
\]
The Lie algebra $A \sp V$ admits a decomposition into a semi-direct product without needing to pass to a completion:
\[
A \sp V \cong V \ltimes (A \otimes \L_+).
\]
Taking $M = A \otimes W$, we apply the K\"unneth formula (Theorem \ref{semidirect_Kunneth}) and we get
\[
H^*(V, M) \cong H^*_A (V, A) \otimes H^*(\L_+, W).
\]
Since $H^*_A (V, A) \cong H^*_\text{dR}(X)$ and the only non-zero de Rham cohomology of  affine space $\affine^n$ is $H^0_\text{dR} = \k$, we get the following corollary. 
\begin{corollary}
Let $X = \affine^n$. Let $W$ be a finite-dimensional $\L_+$-module, and let $M = A \otimes W$ be the corresponding tensor $AV$-module. Then 
\[
H^*(W_n, A \otimes W) \cong H^*(\L_+, W).
\]
\end{corollary}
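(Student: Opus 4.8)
The plan is to assemble the machinery already in place and exploit the feature of affine space that its smash product decomposes without any completion. First I would invoke Corollary \ref{corollary:tildebijective}, which gives an isomorphism $H^*(V, M) \cong H^*_A(A \sp V, M)$ for \emph{every} $AV$-module $M$; note that this is the full Chevalley–Eilenberg cohomology, not the Gelfand–Fuks variant, so no differentiability hypothesis intervenes at this stage. The point that makes the affine case special is the second half of Theorem \ref{theorem:BItheorem22}: when $X \cong \affine^n$ we have $A \sp V \cong V \ltimes (A \otimes \L_+)$ as an honest semi-direct product, with no passage to $\widehat{\L}_+$ required. Thus the full cohomology $H^*(W_n, A \otimes W)$ is computed directly by the uncompleted Lie algebra, and the entire Gelfand–Fuks apparatus is bypassed.

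Next I would apply the K\"unneth formula of Theorem \ref{semidirect_Kunneth} with the strong $AV$-module $S = A$ (which is $1$-differentiable, hence a $D$-module) and $\g = \L_+$. This yields
\[
H^*(V, A \otimes W) \cong H^*_A(V, A) \otimes H^*(\L_+, W).
\]
Here I would observe that $W$, being finite-dimensional, is annihilated by $\L_d$ for $d$ large, so the right-hand factor is the cohomology of the genuine (uncompleted) $\L_+$ and no restriction to finite cochains is needed.

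Finally I would identify the first tensor factor. By Theorem \ref{theorem:derhamcohomology} we have $H^*_A(V, A) \cong H^*_{\DR}(X)$, and for $X = \affine^n$ the algebraic de Rham cohomology is concentrated in degree zero with $H^0_{\DR}(\affine^n) = \k$ (the algebraic Poincar\'e lemma). Since tensoring against $\k$ placed in degree zero is the identity functor, the tensor product collapses and
\[
H^*(W_n, A \otimes W) \cong \k \otimes H^*(\L_+, W) \cong H^*(\L_+, W),
\]
which is the assertion.

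There is no serious obstacle here: the corollary is essentially the specialization of Theorem \ref{theorem:main}(b) to $\affine^n$, where the vanishing of the higher de Rham groups degenerates the product. The only points genuinely requiring care are (i) verifying that one may combine the full-cohomology isomorphism of Corollary \ref{corollary:tildebijective} with the \emph{uncompleted} decomposition of Theorem \ref{theorem:BItheorem22} — this is precisely what distinguishes affine space from a general variety with uniformizing parameters, where completion is unavoidable and only the Gelfand–Fuks cohomology survives — and (ii) invoking the algebraic Poincar\'e lemma to pin down $H^*_{\DR}(\affine^n)$.
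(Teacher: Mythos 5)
Your proposal is correct and follows exactly the paper's own argument: Corollary \ref{corollary:tildebijective} to pass to $H^*_A(A \sp V, M)$, the uncompleted decomposition $A \sp V \cong V \ltimes (A \otimes \L_+)$ from Theorem \ref{theorem:BItheorem22}, the K\"unneth formula of Theorem \ref{semidirect_Kunneth} with $S = A$, and finally $H^*_A(V,A) \cong H^*_{\DR}(\affine^n) = \k$. The added remarks (why $A$ is a strong $AV$-module, why the full rather than Gelfand--Fuks cohomology suffices here) are accurate clarifications of the same route, not a different proof.
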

This agrees with the topological result, Theorem \ref{theorem:theorem1.5.4}. 

\subsection{Polynomial vector fields on an $n$-dimensional torus}

In the case when $X \cong \T^n$, the Lie algebra of polynomial vector fields on a torus is closely related to structures of interest in physics. For example, in the case of $n=1$, a central extension of this algebra is the Virasoro algebra. The Virasoro and Witt algebras are the prototypical infinite-dimensional Lie algebras, and play an important role in conformal field theory and other areas. 

Set $V$ to be the polynomial vector fields on an $n$-dimensional torus $\T^n$. The algebra of functions is the algebra of Laurent polynomials $A =  \k [ x_1^{\pm 1}, \dots, x_n^{\pm 1} ]$. The Lie algebra $V$ of vector fields is a free $A$-module 
\[
V = \bigoplus_{i=1}^n A \dx{i},
\]
and $\{ x_1, \ldots, x_n\}$ are uniformizing parameters on the torus. 
This Lie algebra $V$ is graded by $\Z^n$, but unlike in the case of the affine space, this grading has a full support.

Let $W$ be a finite-dimensional $\L_+$-module, and consider a tensor module $M = A \otimes W$. By Theorem \ref{theorem:main},
\[
H_\GF^*(V, A \otimes W) \cong H_\DR^*(\T^n) \otimes H^*(\L_+, W).
\]
The de Rham cohomology in this case is generated by the cocycles $x_j^{-1}dx_j$ so that $H_\DR^i(\T^n) \cong \k^{\binom{n}{i}}$, which gives the following result:
\begin{corollary}
Let $X = \T^n$. Let $W$ be a finite-dimensional $\L_+$-module, and let $M = A \otimes W$ be the corresponding tensor $AV$-module. Then 
\[
H_\GF^k(V, A \otimes W) \cong \sum_{i+j =k} \k^{{n \choose i}} \otimes H^{j}(\L_+, W).
\]
\end{corollary}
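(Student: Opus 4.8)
The plan is to apply Theorem \ref{theorem:main}(b) directly to the torus and then identify both tensor factors explicitly. By that theorem we immediately obtain the decomposition
\[
H_\GF^*(V, A \otimes W) \cong H_\DR^*(\T^n) \otimes H^*(\L_+, W),
\]
so the only remaining work is to compute $H_\DR^*(\T^n)$, since $H^*(\L_+, W)$ is left in terms of the abstract Lie algebra cohomology (and is only known in special cases, per the Remark). The key step is therefore the de Rham computation for the torus, where $A = \k[x_1^{\pm 1}, \dots, x_n^{\pm 1}]$.

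First I would verify that the torus genuinely falls under the hypotheses of Theorem \ref{theorem:main}: the coordinates $\{x_1, \dots, x_n\}$ are uniformizing parameters (the derivations $\partial/\partial x_i$ extend to $A$ and commute, and $A$ is a free module over $\k[x_1, \dots, x_n]$ in the relevant étale sense), and $V = \bigoplus_i A \,\partial/\partial x_i$ is free over $A$. With $S = A$ we are in case (b). Next I would compute the algebraic de Rham cohomology of $\T^n$ directly. The de Rham complex on $A$ is the Koszul-type complex $\Omega^\bullet(A) = \bigwedge_A^\bullet \Omega^1(A)$ with $\Omega^1(A)$ free on $dx_1, \dots, dx_n$; equivalently, since $A$ is a Laurent polynomial ring, this is the tensor product over $i$ of the one-variable complexes $\k[x_i^{\pm 1}] \xrightarrow{d} \k[x_i^{\pm1}]\,dx_i$. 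In one variable the map $f \mapsto f'\,dx$ on Laurent polynomials has kernel $\k$ and cokernel spanned by the class of $x^{-1}dx$ (since every $x^m dx$ with $m \neq -1$ is exact, being $d(x^{m+1}/(m+1))$, while $x^{-1}dx$ is closed but not exact). Thus each one-variable factor has $H^0 \cong \k$ and $H^1 \cong \k$, generated by $x_j^{-1}dx_j$.

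Applying the classical Künneth formula (Theorem \ref{direct_sum_Kunneth}, or the standard Künneth theorem for tensor products of complexes of vector spaces over a field) across the $n$ factors, I would conclude that $H_\DR^i(\T^n)$ has a basis indexed by the $i$-element subsets $\{j_1 < \cdots < j_i\} \subseteq \{1, \dots, n\}$, with representative cocycles $x_{j_1}^{-1}dx_{j_1} \wedge \cdots \wedge x_{j_i}^{-1}dx_{j_i}$. Hence $H_\DR^i(\T^n) \cong \k^{\binom{n}{i}}$. Substituting this into the decomposition and collecting degrees via the grading on the tensor product of cochain complexes yields
\[
H_\GF^k(V, A \otimes W) \cong \sum_{i+j=k} \k^{\binom{n}{i}} \otimes H^j(\L_+, W),
\]
which is the claimed formula. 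The main obstacle is not conceptual but a matter of care: verifying that $x^{-1}dx$ is a nontrivial class requires checking that $x^{-1}$ is not a derivative of any Laurent polynomial, and confirming that the algebraic de Rham cohomology of $\T^n$ (with $A$ the Laurent polynomials, not $\mathcal{C}^\infty$ functions) really does reproduce the topological Betti numbers $\binom{n}{i}$ of the $n$-torus. This agreement is a clean algebraic analogue of the topological computation and is the one genuine calculation in the proof.
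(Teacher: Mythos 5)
Your proposal is correct and follows essentially the same route as the paper: apply Theorem \ref{theorem:main}(b) with $S = A$, then substitute $H_\DR^i(\T^n) \cong \k^{\binom{n}{i}}$, generated by the logarithmic cocycles $x_j^{-1}dx_j$. The only difference is one of detail: the paper simply states the torus de Rham computation, whereas you verify it via the one-variable kernel/cokernel argument for $\k[x^{\pm 1}]$ combined with the K\"unneth formula for tensor products of complexes, which is a correct (and welcome) filling-in of the step the paper takes for granted.
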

\subsection{Krichever-Novikov algebras}

In general, Krichever-Novikov algebras are the algebras of vector fields on a Reimann surface with several punctured points. While the Virasoro algebra relates to conformal field theory of genus zero surfaces, these algebras relate to conformal field theory on higher genus surfaces. For an overview of Krichever-Novikov algebras, see \cite{Schlichenmaier2014}. In this section, we will consider the special case of meromorphic vector fields on a Riemann sphere with poles at $m+1$ prescribed points.

Let $X$ be the Riemann sphere punctured at points $\infty, a_1, \dots, a_m$, which we denote by 
$X = \overline{\C} \backslash\{\infty, a_1, \dots, a_m\}$. Then $X$ is a 1-dimensional affine variety over $\C$ with uniformizing parameter $z$, so that $A = \C[z, (z- a_1)^{-1}, \dots, (z-a_m)^{-1}]$ and $V = A \frac{\partial}{\partial z}$. 

For the de Rham cohomology of $X$ we have $H^0_\DR(X) = \C$, while $H^1_\DR(X) = \C^m$, and is spanned by $(z-a_i)^{-1} dz$. We obtain the following result:
\begin{corollary}
Let $X = \overline{\C} \backslash\{\infty, a_1, \dots, a_m\}$. 
Let $W$ be a finite-dimensional $\L_+$-module, and let $M = A \otimes W$ be the corresponding tensor $AV$-module. Then 
\[
H_\GF^k(V, A \otimes W) \cong H^{k}(\L_+, W) \oplus \left(\C^m \otimes H^{k-1}(\L_+, W) \right).
\]
\end{corollary}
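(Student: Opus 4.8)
The plan is to obtain the result as an immediate specialization of Theorem~\ref{theorem:main}(b), so that all of the real work is concentrated in computing the de Rham cohomology of $X$. First I would confirm the hypotheses of that theorem. The space $X = \overline{\C}\backslash\{\infty, a_1,\dots,a_m\}$ is the affine line $\C$ with the $m$ points $a_1,\dots,a_m$ removed, hence a smooth irreducible affine curve over $\C$ with coordinate ring $A = \C[z,(z-a_1)^{-1},\dots,(z-a_m)^{-1}]$, and the single function $z$ is a uniformizing parameter: it is algebraically independent, every element of $A$ is algebraic over $\C[z]$ (indeed rational in $z$), and $\frac{\partial}{\partial z}$ extends to all of $A$. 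Thus $X$ is covered by one \'etale chart and Theorem~\ref{theorem:main}(b) applies, giving
\[
H^*_{\GF}(V, A\otimes W) \cong H^*_{\DR}(X) \otimes H^*(\L_+, W),
\]
whose degree-$k$ component, by the graded tensor-product convention recorded after Theorem~\ref{direct_sum_Kunneth}, is $\bigoplus_{i+j=k} H^i_{\DR}(X)\otimes H^j(\L_+,W)$. It therefore remains only to compute $H^*_{\DR}(X)$.

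For this I would work directly with the de Rham complex, which by Theorem~\ref{theorem:derhamcohomology} is the one computing $H^*_A(V,A)$. Since $X$ is one-dimensional, $\Omega^0(X)=A$ and $\Omega^1(X)=A\,dz$ is free of rank one over $A$, while $\Omega^k(X)=0$ for $k\geq 2$, so the complex reduces to $0 \to A \xrightarrow{d_{\DR}} A\,dz \to 0$. The kernel of $d_{\DR}$ consists of the constants, giving $H^0_{\DR}(X)=\C$. For $H^1_{\DR}(X)=A\,dz/d_{\DR}A$ I would use the partial-fraction basis of $A$ over $\C$ given by the monomials $z^j$ ($j\geq 0$) together with $(z-a_i)^{-\ell}$ ($1\leq i\leq m$, $\ell\geq 1$). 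Applying $d_{\DR}$ sends $z^j \mapsto j z^{j-1}dz$ and $(z-a_i)^{-\ell}\mapsto -\ell\,(z-a_i)^{-\ell-1}dz$, so the image $d_{\DR}A$ contains every $z^k dz$ with $k\geq 0$ and every $(z-a_i)^{-k}dz$ with $k\geq 2$. The only forms not in the image are the simple-pole differentials $(z-a_i)^{-1}dz$ for $i=1,\dots,m$, and these are linearly independent; hence $H^1_{\DR}(X)\cong\C^m$ with this basis.

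Finally I would substitute $H^0_{\DR}(X)=\C$, $H^1_{\DR}(X)\cong\C^m$, and $H^i_{\DR}(X)=0$ for $i\geq 2$ into the graded tensor product to obtain
\[
H^k_{\GF}(V, A\otimes W) \cong \big(\C\otimes H^k(\L_+,W)\big)\oplus\big(\C^m\otimes H^{k-1}(\L_+,W)\big),
\]
which is the claimed formula once $\C\otimes H^k(\L_+,W)$ is identified with $H^k(\L_+,W)$. The assembly from the two factors is formal; the only step with genuine content is the cokernel computation $H^1_{\DR}(X)=A\,dz/d_{\DR}A$, where the mild care needed is to verify that the residue forms $(z-a_i)^{-1}dz$ both span a complement to $d_{\DR}A$ and are independent. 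I expect this partial-fraction bookkeeping to be the only potential obstacle, and a routine one.
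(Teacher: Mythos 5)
Your proposal is correct and follows the paper's own route: both apply Theorem \ref{theorem:main}(b) for the \'etale chart with uniformizing parameter $z$ and then reduce everything to computing $H^0_{\DR}(X)=\C$ and $H^1_{\DR}(X)=\C^m$ spanned by the residue forms $(z-a_i)^{-1}dz$. Your partial-fraction verification of the de Rham computation is simply a more detailed writing-out of the step the paper states without proof.
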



\end{document}